\title{Subgroups of the mapping class group of the torus generated by powers of Dehn twists}
\author{Sudipta Kolay}
\address{School of Mathematics \\ Georgia Institute of Technology\\Atlanta, GA 30332, USA}
\email{skolay3@math.gatech.edu}
\date{}
\theoremstyle{plain}
\newtheorem{thm}{Theorem}[section]
\newtheorem{prop}[thm]{Proposition}
\newtheorem{claim}[thm]{Claim}
\newtheorem{obs}[thm]{Observation}
\newtheorem{fact}[thm]{Fact}
\theoremstyle{definition}
\newtheorem{defi}[thm]{Definition}
\theoremstyle{remark}
\newtheorem{remark}[thm]{Remark}
\newtheorem{example}[thm]{Example}
\newtheorem{question}{Question}
\begin{document}
\maketitle

\begin{abstract} We study subgroups of the mapping class group of the torus generated by powers generated by powers of Dehn twists.
We give a criterion to show when a collection of powers Dehn twists generates a free group using the ping pong lemma. We show that the subgroup generated by three uniform powers of Dehn twists can be either the whole mapping class group,
a direct product of a free group of rank two with the cyclic group of order two, or a free group of rank at most three. We characterize subgroups generated by a collection of (respectively squares of) Dehn twists, if there is a
pair among so that the geometric intersection number of the corresponding simple closed curves is two (respectively one). We also determine the subgroup generated by uniform powers of all Dehn twists.

\end{abstract}

\section{Introduction}
Mapping class groups of closed oriented surfaces are known to be generated by Dehn twists (Dehn \cite{D}, Lickorish \cite{L}, Humphries \cite{H0}).
A natural question is given a collection of Dehn twists what subgroups of the mapping class group do they generate.
The answer is the free group $F_1\cong \mathbb{Z}$ since Dehn twists (unless otherwise mentioned we will always assume all Dehn twists are about essential\footnote{i.e. not homotopic to a point, a puncture or a boundary component.} simple closed curves) are of infinite order.
A complete answer is known for two Dehn twists, by work of Thurston \cite{T} (and later by  Ishida \cite{I} and Ivanov-McCarthy \cite{IM}), depending on the geometric intersection numbers of the two curves, see Fact \ref{2dt}. The question about $n$ Dehn twists generating the free group $F_n$ has been studied and answered, under suitable hypothesis, by Humphries \cite{H1} and Hamidi-Tehrani \cite{HT}.

Related questions have been studied about group generated by specific families of matrices \cite{Sa,Br,CJR, BM}, parabolic elements in hyperbolic isometry group \cite{S}.

The purpose of this note is to discuss for the torus, what subgroups may be generated by three or more (uniform) powers of Dehn twists. Let us denote by $F_k$ the free group of rank $k$, by $F_\infty$ a free group of countably infinite rank, and by $C_k$ the cyclic group of order $k$.

We can completely answer what subgroups are generated by three uniform powers of Dehn twists.

\begin{thm}\label{thmC}
Given three essential simple closed curves on a torus, and a natural number  $s$, the subgroup of Mod$(\mathbb{T}^2)$ generated by $s$-th powers of Dehn twists about them is either $F_1$, or $F_2$, or $F_3$, or $F_2\times C_2$, or the entire mapping class group Mod($\mathbb{T}^2)= SL(2,\mathbb{Z}$). Moreover, the possibility that the subgroup is $F_2\times C_2$ only arises if $s=2$, and possibility that the subgroup is Mod($\mathbb{T}^2)$ only arises if $s=1$.
\end{thm}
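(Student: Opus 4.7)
The plan is a case analysis on the configuration of the three curves $c_1, c_2, c_3$ and the exponent $s$, throughout using the identification $\mathrm{Mod}(\mathbb{T}^2) \cong SL(2,\mathbb{Z})$, under which each Dehn twist $T_{c_i}$ is a transvection determined by the primitive homology class $v_i \in \mathbb{Z}^2$, and $i(c_j,c_k) = |\det(v_j,v_k)|$. If two or more of the curves coincide as isotopy classes, then $G := \langle T_{c_1}^s, T_{c_2}^s, T_{c_3}^s\rangle$ is generated by at most two Dehn twist powers, and Fact~\ref{2dt} (together with the standard observation that $\langle T_a^s, T_b^s\rangle \cong F_2$ whenever the two distinct curves intersect at least once and $s \geq 2$) identifies $G$ as one of $F_1$, $F_2$, or $SL(2,\mathbb{Z})$, with $SL(2,\mathbb{Z})$ occurring only when $s = 1$ and the intersection number is $1$. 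Henceforth I assume the three curves are pairwise distinct.

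For $s \geq 2$, I would record two general constraints: $T_{c_i}^s \in \Gamma(s) \subsetneq SL(2,\mathbb{Z})$, so $G \neq SL(2,\mathbb{Z})$; and $-I \in \Gamma(s)$ if and only if $s \leq 2$, so for $s \geq 3$ the group $G$ is torsion-free (since $-I$ is the unique order-$2$ element of $SL(2,\mathbb{Z})$), excluding $F_2 \times C_2$. The decisive new case is then $s = 2$ with $c_1, c_2, c_3$ a Farey triple (all pairwise intersection numbers equal to $1$). Here I would verify, by a direct matrix computation in the canonical model $v_1 = (1,0)$, $v_2 = (0,1)$, $v_3 = (1,1)$ (to which any Farey triple is $SL(2,\mathbb{Z})$-conjugate, since $SL(2,\mathbb{Z})$ acts transitively on Farey triples), the identity $T_{c_{\sigma(1)}}^2 T_{c_{\sigma(2)}}^2 T_{c_{\sigma(3)}}^2 = -I$ for a suitable permutation $\sigma$; this forces $-I \in G$. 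Meanwhile, any two of the three generators $T_{c_i}^2$ map to a generating pair of $\Gamma(2)/\{\pm I\} \cong F_2$ in $PSL(2,\mathbb{Z})$ (by Fact~\ref{2dt}), so the image of $G$ in $PSL(2,\mathbb{Z})$ is all of $F_2$. Hence $G$ sits in a central extension $1 \to C_2 \to G \to F_2 \to 1$, which splits because $F_2$ is free, yielding $G \cong F_2 \times C_2$.

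All remaining cases with three distinct curves are handled uniformly. If $s = 1$ and some pair has intersection $1$, Fact~\ref{2dt} applied to that pair already gives $G = SL(2,\mathbb{Z})$. In every other situation — namely $s = 1$ with all $i_{jk} \geq 2$, $s = 2$ with at least one $i_{jk} \geq 2$, or $s \geq 3$ arbitrary — I would invoke the ping-pong criterion developed in the paper's earlier section, applied to the three parabolics $T_{c_i}^s$ acting on $\mathbb{R}P^1$ (equivalently, on $\mathbb{H}^2$) with disjoint neighborhoods of their three fixed slopes as ping-pong sets, to conclude $G \cong F_3$. The principal obstacle I anticipate is the matrix verification of the exceptional $s = 2$ Farey identity, together with the check that no analogue persists in the borderline $s = 2$ sub-configurations (notably when exactly two pairwise intersections equal $1$ but the third is $\geq 2$); computing the trace of $T_{c_{\sigma(1)}}^s T_{c_{\sigma(2)}}^s T_{c_{\sigma(3)}}^s$ and comparing it to $\pm 2$, alongside the ping-pong setup, should rule out any residual relation.
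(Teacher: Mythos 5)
The first two paragraphs of your proposal are sound, and in fact your derivation of the ``moreover'' clause is cleaner than the paper's: $G\le\Gamma(s)\subsetneq SL(2,\mathbb{Z})$ for $s\ge 2$ rules out the full group, and torsion-freeness of $\Gamma(s)$ for $s\ge 3$ rules out $F_2\times C_2$; your identification of the Farey-triple case as a split central extension $1\to C_2\to G\to F_2\to 1$ is a legitimate alternative to the paper's argument via the hyperelliptic involution. The gap is in your final paragraph. The claim that in ``every other situation'' ping-pong applies and yields $F_3$ is false, and the theorem's content is precisely that it fails. Concretely: for $s=1$ with three distinct curves and all pairwise intersection numbers $\ge 2$, Example~\ref{eg2} of the paper (classes $(1,0)$, $(1,3)$, $(-11,3)$, intersections $3,3,36$) gives $F_2$, because $T_x^4$ carries $z$ to $y$; Example~\ref{eg4} (classes $(1,0)$, $(7,3)$, $(1,4)$, intersections $3,4,25$) gives all of $SL(2,\mathbb{Z})$, contradicting even the weaker claim that the group is free in this regime. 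For $s\ge 3$, taking $z=T_x^s(y)$ produces three distinct curves whose $s$-th twist powers generate only $F_2$, since $T_z^s=T_x^sT_y^sT_x^{-s}$. For $s=2$, taking $x=(1,0)$, $y=(0,1)$, $z=T_x^2\bigl((1,1)\bigr)=(3,1)$ gives a triple with one intersection number equal to $3$ whose squares generate $F_2\times C_2$, not $F_3$. So your case division by the raw intersection numbers of the given curves cannot work, and a trace computation on a single product will not detect these relations.

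The missing idea is a reduction step before ping-pong: the paper's ``Euclidean algorithm'' repeatedly replaces one curve by its image under a power of $T_{x_i}^s$ for another generator (this conjugates a generator and hence preserves the subgroup) so as to strictly decrease some pairwise geometric intersection number, terminating either with fewer distinct curves (whence $F_1$, $F_2$, or $SL(2,\mathbb{Z})$ by Fact~\ref{2dt}) or with a triple whose intersection numbers are $s$-comparable. Only then does ping-pong apply, and even then not automatically: Theorem~\ref{thmA} needs the quantitative $s$-proportional hypothesis, and the paper's Sections 5--6 are devoted to checking that $s$-comparable triples of actual curve classes satisfy it except in the single exceptional configuration $s=2$ with all intersections $1$. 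Three parabolics with distinct fixed slopes do not in general generate a free group of rank three, so ``disjoint neighborhoods of the fixed points in $\mathbb{R}P^1$'' cannot serve as ping-pong sets without such a hypothesis. Your proof would be repaired by inserting this reduction and the accompanying case analysis; as written, it does not establish that the subgroup always lies on the stated list.
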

The proof gives an algorithmic way to determine which subgroup it is.

If $G$ is a subgroup generated by $s$-th powers of Dehn twists, we will characterize in Section 7 all subgroups which contain a pair of Dehn twists so that the geometric intersection numbers of the corresponding simple closed curves multiplied by $s$ exactly equals two. 

\begin{thm}\label{thmF} Suppose $x$ and $y$ are simple closed curves in the torus with geometric intersection number $g$. For $sg=2$, the subgroup $G$ of Mod($\mathbb{T}^2)$ generated by a collection of $s$-th powers of Dehn twists including $T_x^s$ and $T_y^s$, is either $F_2$, or Mod($\mathbb{T}^2)=SL(2,\mathbb{Z})$ or $F_2\times C_2$.
 \end{thm}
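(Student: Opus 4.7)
The condition $sg = 2$ forces $(s, g) \in \{(2, 1), (1, 2)\}$, and I would treat the two cases separately.

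\emph{Case $s = 2$, $g = 1$.} The square of any Dehn twist on $\mathbb{T}^2$ satisfies $T_v^2 \equiv I \pmod{2}$, so $G$ is contained in the principal congruence subgroup $\Gamma(2) = \ker\!\bigl(SL(2,\mathbb{Z}) \to SL(2,\mathbb{Z}/2)\bigr)$. I would then identify $\Gamma(2) \cong F_2 \times C_2$: the center $\{\pm I\}$ lies in $\Gamma(2)$, the quotient $P\Gamma(2)$ is free of rank two, and since $H^2(F_2; C_2) = 0$ the central extension splits. Sanov's theorem gives $\langle T_x^2, T_y^2\rangle \cong F_2$, and this free subgroup realizes the natural section $F_2 \hookrightarrow F_2 \times C_2$; in particular it has index two in $\Gamma(2)$. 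Since no subgroup strictly interpolates between a group and an index-two subgroup, $G \in \{F_2, F_2 \times C_2\}$.

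\emph{Case $s = 1$, $g = 2$.} Here $\langle T_x, T_y\rangle \cong F_2$ by Fact \ref{2dt}. I would induct on the number of additional generators beyond $T_x, T_y$: at each step the running subgroup is either $\langle T_x, T_y\rangle$ itself or $SL(2,\mathbb{Z})$, and adding a new Dehn twist $T_z$ to $\langle T_x, T_y\rangle$ gives---by the key lemma below---either the same $F_2$ (if $T_z \in \langle T_x, T_y\rangle$) or all of $SL(2, \mathbb{Z})$. The key lemma is: \emph{if $T_z \notin \langle T_x, T_y\rangle$, then $\langle T_x, T_y, T_z\rangle = SL(2,\mathbb{Z})$.} Theorem \ref{thmC} restricts this three-generator subgroup to $\{F_2, F_3, SL(2,\mathbb{Z})\}$ (the $F_2 \times C_2$ option being excluded for $s = 1$), so it suffices to rule out $F_3$. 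The main computation is the trace identity $\operatorname{tr}(T_x T_y) = 2 - i(x, y)^2 = -2$, which forces $T_x T_y = -T_w^{-4}$ for a unique primitive class $w$ (the $(-1)$-eigenvector of $T_x T_y$, read off from the rank-one matrix $T_x T_y + I$); one checks $i(x, w) = i(y, w) = 1$. This yields the identity $T_x T_y T_w^4 = -I$ in $SL(2,\mathbb{Z})$.

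If $z = w$, this identity exhibits $-I \in \langle T_x, T_y, T_z\rangle$, a torsion element, so the subgroup is not free; combined with $\langle T_x, T_z\rangle = SL(2,\mathbb{Z})$ (standard braid generators, using $i(x,z)=1$) this gives the conclusion. The same argument handles any $z$ with $i(x, z) = 1$ or $i(y, z) = 1$. The main obstacle is the remaining generic case $z \neq w$ with $i(x, z), i(y, z) \geq 2$: neither the original relation nor its analog obtained by applying the trace identity to $(T_x, T_z)$ (when $i(x,z)=2$, giving a second midpoint $w'$) directly expresses the identity as a word in $T_x, T_y, T_z$. Ruling out $F_3$ in this regime requires combining such relations to produce a nontrivial torsion element of $\langle T_x, T_y, T_z\rangle$, or a more direct analysis of the action on the Farey tessellation that exploits the finite-index inclusion $\langle T_x, T_y\rangle \leq SL(2,\mathbb{Z})$. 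I expect a careful case analysis, in the spirit of the algorithmic procedure underlying Theorem \ref{thmC}, to close this case.
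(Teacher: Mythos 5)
Your case $s=2$, $g=1$ is correct, and it takes a genuinely different route from the paper: the paper (Claim \ref{clB} and Proposition \ref{gino1}) explicitly computes the set $H^2_G$ of primitive classes whose squared twists lie in $\langle T_x^2,T_y^2\rangle$ and adds generators one at a time, whereas your sandwich $\langle T_x^2,T_y^2\rangle\leq G\leq \Gamma_2$ combined with $[\Gamma_2:\langle T_x^2,T_y^2\rangle]=2$ settles the isomorphism type in one stroke (at the cost of not telling you \emph{which} of the two groups you get, which the paper's description of $H^2_G$ does).

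The case $s=1$, $g=2$, however, contains a genuine gap, which you yourself flag: the torsion-relation strategy only disposes of $z=w$ and of curves $z$ meeting $x$ or $y$ once, and the generic subcase $i(x,z),i(y,z)\geq 2$ with $z\neq w$ is left open. The missing idea---and the paper's actual argument (Claim \ref{clA})---is to determine completely the set $H^1_G$ of primitive classes $v$ with $T_v\in\langle T_x,T_y\rangle$, rather than hunting for torsion elements. In coordinates where $\vec x=(1,0)$ and $\vec y=(1,2)$, a Euclidean-algorithm induction on the second coordinate (apply a power of $T_x$ to shrink the first coordinate, then $T_y^{\pm1}$ to strictly shrink the second) shows that $H^1_G$ contains every primitive vector with even second coordinate; and it can contain nothing else, since otherwise the Bezout argument below would force $G=SL(2,\mathbb{Z})$, contradicting $G\cong F_2$. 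Now given any $z$ with $\vec z=(p,q)$ and $q$ odd, Bezout produces $(a,b)$ with $aq-bp=1$, and one of $(a,b)$ or $(a+p,b+q)$ has even second coordinate, hence lies in $H^1_G$ and has geometric intersection number one with $z$; so $\langle T_x,T_y,T_z\rangle=SL(2,\mathbb{Z})$ by Fact \ref{2dt}. If $q$ is even, $T_z\in G$ already. This dichotomy is exactly the key lemma your induction needs, it makes the running subgroup literally equal to $\langle T_x,T_y\rangle$ or to $SL(2,\mathbb{Z})$ at every stage, and it bypasses both Theorem \ref{thmC} and the case analysis you were hoping to carry out.
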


This theorem follows from Propositions \ref{gino2} and \ref{gino1}, where we analyze the cases $s=1, g=2$ and $s=2,g=1$ separately. It is easy to figure out which of the subgroups $F_2$, or $SL(2,\mathbb{Z})$ or $F_2\times C_2$ we get in a given situation, once we make a change of coordinates, see the comments just following 
Propositions \ref{gino2} and \ref{gino1}.

We also determine the subgroup generated by all $s$-th powers of Dehn twists in the torus.
\begin{thm}\label{thmD}
The subgroup of Mod($\mathbb{T}^2)$ generated by $s$-th powers of all Dehn twists is isomorphic to  Mod($\mathbb{T}^2)=SL(2,\mathbb{Z})$, $F_2\times C_2$, $F_3$,   $F_5$, $F_{11}$ and $F_\infty$ for $s$ equal to one, two, three, four, five and greater than five respectively.
\end{thm}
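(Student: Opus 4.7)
The plan is to use that all Dehn twists on $\mathbb{T}^2$ are conjugate in $\text{Mod}(\mathbb{T}^2)=SL(2,\mathbb{Z})$—the group acts transitively on primitive homology classes up to sign—so the subgroup $G_s$ generated by $s$-th powers of all Dehn twists is the normal closure of a single element $T^s=T_{(1,0)}^s$ in $SL(2,\mathbb{Z})$. A quick computation shows $T_a=I+N_a$ with $N_a^2=0$, hence $T_a^s=I+sN_a\in\Gamma(s)$, the principal congruence subgroup of level $s$. In particular $-I\notin G_s$ for $s\geq 3$, and $G_s$ then injects into $PSL(2,\mathbb{Z})$.

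The key reduction uses the presentation $PSL(2,\mathbb{Z})=\langle a,b\mid a^2,b^3\rangle\cong C_2*C_3$, where $a=\bar S$ and $b=\bar S\bar T$ so that $\bar T=ab$. Adjoining the relation $\bar T^s=1$ yields $\langle a,b\mid a^2,b^3,(ab)^s\rangle$, the orientation-preserving $(2,3,s)$ triangle (von Dyck) group, which has orders $1,6,12,24,60$ for $s=1,\ldots,5$ and is infinite for $s\geq 6$. Thus $[PSL(2,\mathbb{Z}):\bar G_s]$ equals these numbers.

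For $s=1$ this recovers the Dehn--Lickorish generating theorem. For $s=2$, I would verify by direct matrix multiplication the identity $T_{(1,0)}^2\,T_{(0,1)}^2\,T_{(1,1)}^2=-I$, placing $-I\in G_2$; then $[SL(2,\mathbb{Z}):G_2]=6=[SL(2,\mathbb{Z}):\Gamma(2)]$ and $G_2=\Gamma(2)\cong F_2\times C_2$. For $s=3,4,5$, since $-I\notin G_s$ the index is $[SL(2,\mathbb{Z}):G_s]=2[PSL(2,\mathbb{Z}):\bar G_s]=24,48,120$, which matches $|SL(2,\mathbb{Z}/s)|=[SL(2,\mathbb{Z}):\Gamma(s)]$ and forces $G_s=\Gamma(s)$. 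For $s\geq 3$ the group $\Gamma(s)$ is torsion-free and isomorphic to $\pi_1(Y(s))$ for the open modular curve $Y(s)=\mathbb{H}/\bar\Gamma(s)$; the classical identification of $Y(s)$ as a sphere with $4,6,12$ punctures for $s=3,4,5$ yields the free groups $F_3,F_5,F_{11}$.

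For $s\geq 6$ the triangle group quotient is infinite, so $\bar G_s$ has infinite index in $PSL(2,\mathbb{Z})$ and hence in the finitely generated free group $\bar\Gamma(s)$, in which $\bar G_s$ is normal. The remaining ingredient is Karrass--Solitar's theorem that every nontrivial finitely generated normal subgroup of a free group has finite index; this forces $\bar G_s$ to be of infinite rank. Since $\bar G_s$ is torsion-free (Kurosh's theorem applied to $C_2*C_3$ makes any such subgroup free), we get $\bar G_s\cong F_\infty$; combined with $-I\notin G_s$ this yields $G_s\cong F_\infty$. The main technical obstacle will be the clean rank determinations for $s=3,4,5$, which rely on the standard theory of modular curves.
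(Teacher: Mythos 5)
Your proof is correct, and it reaches the indices by a genuinely different route from either of the paper's two arguments. The paper's first proof simply quotes the index of $N_s$ in $SL(2,\mathbb{Z})$ from Humphries and Newman and then certifies free generators of $\Gamma_s$ via Kulkarni's algorithm; its second proof works with the Farey complex, computing the Euler characteristic of $\sfrac{F}{\Gamma_s}$ and observing that killing the $s$-th powers of Dehn twists kills exactly the peripheral loops, so $\sfrac{\mathbb{H}^2}{N_s}$ is a punctured sphere. You instead observe that $N_s$ is the normal closure of $\bar{T}^s$ and that $PSL(2,\mathbb{Z})/\bar{N}_s=\langle a,b\mid a^2,b^3,(ab)^s\rangle$ is the $(2,3,s)$ von Dyck group, whose orders $1,6,12,24,60$ (and infinitude for $s\ge 6$) give the indices purely algebraically --- a self-contained replacement for the quoted index results, and arguably the cleanest part of the whole argument. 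From there the two approaches reconverge: your comparison of $[SL(2,\mathbb{Z}):G_s]$ with $|SL(2,\mathbb{Z}/s)|$ to force $G_s=\Gamma(s)$ for $s\le 5$, the cusp counts $4,6,12$ of $Y(s)$ giving ranks $3,5,11$, and the Schreier/Karrass--Solitar argument for $s\ge 6$ are all essentially the content of the paper's covering-space proof (your identity $T_{(1,0)}^2T_{(0,1)}^2T_{(1,1)}^2=-I$ is exactly the paper's relation $T_x^2T_y^2=\iota T_z^{-2}$ from Claim \ref{csquares}). What your route buys is independence from the Farey-complex combinatorics and from the cited literature for the indices; what it does not buy is the explicit list of free generators recorded in Theorem \ref{thmE}, which the paper's proofs are partly designed to produce. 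As a side benefit, your cusp count for $s=5$ confirms that the correct rank there is $11$, consistent with Theorem \ref{thmD} (the ``$F_7$'' in the statement of Theorem \ref{thmE} is a typo).
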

This result will follow from a slightly more detailed statement in Theorem \ref{thmE} in Section 9.

A key tool to prove the above results is a freeness criterion, for which we will need the notions of  proportional and comparable geometric intersection numbers for a collection of simple closed curves. 
We let $I$ denote a set with cardinality at least two, and $s,s_i$ will be natural numbers for the rest of this article.
  \begin{defi}\label{prop} Given a collection\footnote{There can be repetitions among the $s_i$.} of natural numbers $\{s_i\}_{i\in I}$, we say a collection of distinct essential simple closed curves $\{x_i\}_{i\in I}$ on the torus have $\{s_i\}$-\textbf{proportional} geometric intersection numbers if for all triples of distinct $i,j,k$, the geometric intersection numbers satisfy the inequality $$(x_i,x_j)+(x_j,x_k)+(x_k,x_i)\le   s_j(x_i,x_j)(x_j,x_k).$$
  \end{defi}
The indexing set $I$ will be clear from the context, and so for notational convenience we use the term $\{s_i\}$-{proportional}  instead of $\{s_i\}_{i\in I}$-{proportional}. 

The inequalities in the definition of proportional geometric intersection numbers turn out to be exactly what is required to show the subgroup generated in the following theorem is free using the ping pong lemma.

\begin{thm}\label{thmA}
Suppose $\{s_i\}_{i\in I}$ is a collection of natural numbers, $\{x_i\}_{i\in I}$  is a collection of distinct essential simple closed curves on the torus with $\{s_i\}$-proportional geometric intersection numbers. Then the subgroup
$\langle T^{s_i}_{x_i}|{i\in I}\rangle$ of Mod$(\mathbb{T}^2)$ generated by the $s_i$-th powers of Dehn twists about $\{x_i\}_{i\in I}$ is freely generated by the collection $\{T^{s_i}_{x_i}\}_{i\in I}$.
\end{thm}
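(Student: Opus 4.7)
I will apply the ping pong lemma for $n$ generators to the action of Mod$(\mathbb{T}^2)=SL(2,\mathbb{Z})$ on the set of (isotopy classes of oriented) essential simple closed curves on the torus, which I identify with primitive integer vectors in $\mathbb{Z}^2\setminus\{0\}$. Writing $\nu_i(z):=(x_i,z)=|x_i\wedge z|$ and $c_{ij}:=(x_i,x_j)=|x_i\wedge x_j|$, the powers of Dehn twists act by $T_{x_i}^{ks_i}(z)=z+ks_i(x_i\wedge z)x_i$. For each $i\in I$ I will take as the ping pong set
\[
A_i:=\{z:\nu_i(z)<\nu_j(z)\text{ for every }j\in I\setminus\{i\}\}.
\]
These sets are pairwise disjoint directly from the strict inequality, and each is nonempty because $x_i\in A_i$ (using $\nu_i(x_i)=0<c_{ij}$). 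The $A_i$ are also invariant under $v\mapsto -v$, which is what ensures that ping pong yields freeness in $SL(2,\mathbb{Z})$ rather than merely in $PSL(2,\mathbb{Z})$.

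The heart of the argument is the ping pong condition: for $l\neq i$, $k\in\mathbb{Z}\setminus\{0\}$ and $z\in A_l$, the image $z':=T_{x_i}^{ks_i}(z)$ lies in $A_i$. Since $\nu_i(z')=\nu_i(z)$, I must show $\nu_m(z')>\nu_i(z)$ for every $m\neq i$. From the reverse triangle inequality,
\[
\nu_m(z')\geq |k|\,s_i c_{im}\nu_i(z)-\nu_m(z)\geq s_i c_{im}\nu_i(z)-\nu_m(z),
\]
so everything reduces to controlling $\nu_m(z)$ from above in terms of $\nu_i(z)$. When $m=l$ this is immediate: $A_l$-membership gives $\nu_l(z)<\nu_i(z)$, reducing the claim to the bound $s_i c_{il}\geq 2$. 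This in turn follows by applying the proportionality inequality to any triple $\{i,l,k\}$ with $k\in I\setminus\{i,l\}$ and invoking the integrality of $s_i c_{il}$ (this is where $|I|\geq 3$ enters; the case $|I|=2$ is covered by Fact \ref{2dt}).

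The main obstacle is the case $m\neq i,l$, since $A_l$-membership does not directly bound $\nu_m(z)$. To handle it, I will derive the torus-specific triangle inequality
\[
\nu_m(z)\,c_{li}\leq \nu_i(z)\,c_{lm}+\nu_l(z)\,c_{im}
\]
by writing $z=\alpha x_i+\beta x_l$ in the $\mathbb{Q}$-basis $(x_i,x_l)$ of $\mathbb{Q}^2$ (admissible because $c_{il}>0$), expanding $x_m\wedge z$, and taking absolute values. Combined with $\nu_l(z)<\nu_i(z)$ this yields $\nu_m(z)<\nu_i(z)(c_{lm}+c_{im})/c_{li}$, and substituting into the lower bound for $\nu_m(z')$ produces
\[
\nu_m(z')>\nu_i(z)\cdot\frac{s_i c_{im}c_{li}-c_{lm}-c_{im}}{c_{li}}\geq \nu_i(z),
\]
the last step being precisely the $\{s_i\}$-proportionality hypothesis for the triple $\{i,l,m\}$ with middle index $i$: $c_{il}+c_{im}+c_{lm}\leq s_i c_{il}c_{im}$. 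This exhibits Definition \ref{prop} as exactly calibrated to the ping pong estimate, and the ping pong lemma then concludes that $\{T^{s_i}_{x_i}\}_{i\in I}$ freely generate.
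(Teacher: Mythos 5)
Your proof is correct and follows essentially the same route as the paper's: the same ping pong sets $N_i=\{c : (c,x_i)<(c,x_j)\text{ for all }j\neq i\}$, the same reverse-triangle estimate for the twisted curve, and the same homological triangle inequality $(c,x_m)(x_l,x_i)\le (c,x_i)(x_l,x_m)+(c,x_l)(x_i,x_m)$ coming from linear dependence in $H_1(\mathbb{T}^2;\mathbb{Q})$, with the $\{s_i\}$-proportionality hypothesis entering at exactly the same point. The only differences are cosmetic (you expand the moving curve in the basis $(x_i,x_l)$ rather than expanding the third generator in the basis of the other two), and you are in fact slightly more careful than the paper in deriving $s_i(x_i,x_l)\ge 2$ from proportionality and in flagging that this step requires $|I|\ge 3$.
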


\begin{remark}
Note that Theorem \ref{thmA} above easily carry over when $s_i$'s are integers. 
Note that we can replace $T_{x_i}^{s_i}$ with $T_{x_i}^{-s_i}$ if $s_i$ is negative; and we can drop the Dehn twist power $T_{x_i}^{s_i}$ from the collection if $s_i=0$ without changing the subgroup. 
Thus it suffices to just consider the case that $s_i$'s are positive, and we will make that assumption henceforth.
\end{remark}

\begin{defi}\label{cmp}
  We say a collection of distinct essential simple closed curves  $\{x_i\}_{i\in I} $ on the torus are have $\{s_i\}$- \textbf{comparable}  geometric intersection numbers if for all triples of distinct $i,j,k$, the geometric intersection numbers satisfy the inequality $$2(x_i,x_j)\leq s_k(x_i,x_k)(x_k,x_j).$$
  \end{defi}

The motivation for defining $s$-comparable geometric intersection numbers comes from (what we are calling) the Euclidean algorithm (see Section 3), it turns out that if we start out with three curves on the torus, then one can find another collection  of (at most three) curves which have $s$-comparable geometric intersection numbers; so that the subgroups generated by $s$-th powers of Dehn twists about them will be the same.

\begin{remark}
In the above definitions for proportional (respectively comparable) intersection numbers, if all the $s_i$'s are equal say to $s$, then we will say the collection of simple closed curves have \textbf{$s$-proportional} (respectively \textbf{$s$-comparable}) geometric intersection numbers. A lot of our results will be in this scenario, where all the $s_i$'s are the same.

\end{remark}

\begin{remark}
Note that any collection of two (or less) simple closed curves vacuously has $s$-comparable  and $s$-proportional geometric intersection numbers, for any $s$. The reason for not requiring additional conditions for two simple closed curve is we know precisely what relations can occur between two (possibly different) powers of Dehn twists, see Fact \ref{2dt}.
\end{remark}

As we will see in Proposition  \ref{p1}, for $s\neq  2$, if a collection $\{x_i\}_{i\in I}$ has $s$-comparable geometric intersection numbers, then it also has $s$-proportional geometric intersection numbers. Consequently we have:

\begin{thm}\label{thmB}
Suppose $s\in\mathbb{N}\setminus\{2\}$, $\{x_i\}_{i\in I}$  is a collection of distinct essential simple closed curves on the torus with $s$-comparable geometric intersection numbers. Then the subgroup of Mod$(\mathbb{T}^2)$ generated by the $s$-th powers of Dehn twists about $\{x_i\}_{i\in I}$ is freely generated by the collection $\{T^s_{x_i}\}_{i\in I}$.

\end{thm}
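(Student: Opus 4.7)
Theorem \ref{thmB} will be a direct consequence of Theorem \ref{thmA} combined with the implication, embodied in the promised Proposition \ref{p1}, that for $s \neq 2$ any collection with $s$-comparable geometric intersection numbers automatically has $s$-proportional geometric intersection numbers. Granting this, the plan is immediate: from $s$-comparability I deduce $s$-proportionality via Proposition \ref{p1}, then apply Theorem \ref{thmA} with $s_i = s$ for every $i \in I$ to conclude that $\langle T^s_{x_i} : i \in I\rangle$ is freely generated by the collection $\{T^s_{x_i}\}_{i \in I}$. Thus the only real work is in justifying the implication.

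To prove the implication $s\text{-comparable} \Rightarrow s\text{-proportional}$ for $s \neq 2$, fix a triple of distinct indices $i, j, k$ and write $a = (x_i, x_j)$, $b = (x_j, x_k)$, $c = (x_k, x_i)$. Since distinct essential simple closed curves on $\mathbb{T}^2$ correspond to distinct slopes, $a, b, c \geq 1$. After relabeling I may assume $a \leq b \leq c$, so the tightest $s$-proportional inequality to verify is $a + b + c \leq sab$ (the other two proportional inequalities, with right-hand sides $sac$ and $sbc$, are weaker). The $s$-comparable inequality $2c \leq sab$, applied with $j$ as the middle index, yields $c \leq sab/2$ and hence $a + b + c \leq a + b + sab/2$, so it suffices to prove $\tfrac{2}{a} + \tfrac{2}{b} \leq s$. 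For $s \geq 4$ this is clear since each summand is at most $2$; for $s = 3$ the only borderline case is $a = b = 1$, where the same comparable bound forces $c = 1$ and one verifies $a + b + c = 3 = sab$ directly.

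The delicate case is $s = 1$, where the pure inequality approach breaks down: putative triples such as $(a,b,c) = (2,2,2)$, $(2,3,3)$, $(2,4,4)$, or $(3,3,4)$ satisfy all three $1$-comparable inequalities but violate $a + b + c \leq ab$. To close the argument here one must invoke the torus-specific realizability of triples of intersection numbers. Identifying each essential simple closed curve with a primitive vector $v \in \mathbb{Z}^2$ (equivalently, its slope), one has $(x_r, x_\ell) = |\det(v_r, v_\ell)|$, and a short parity/modular-arithmetic case check (fixing $v_1 = (1,0)$, writing $v_2 = (p, b)$ with $\gcd(p,b)=1$, and analyzing the divisibility constraints on the candidates for $v_3$) rules out each such would-be counterexample. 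I expect this arithmetic realizability check, rather than the ping-pong input from Theorem \ref{thmA}, to be the main technical obstacle in the overall proof.
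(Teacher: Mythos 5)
Your overall skeleton is exactly the paper's: Theorem \ref{thmB} is deduced from Theorem \ref{thmA} via the implication ``$s$-comparable $\Rightarrow$ $s$-proportional for $s\neq 2$'' (the paper's Proposition \ref{p1}), and your reduction of the proportionality inequalities to $2/a+2/b\le s$ disposes of $s\ge 4$ cleanly and of $s=3$ after the single borderline triple $a=b=1$, $c=1$. That part is complete and, if anything, tidier than the paper's route through Observation \ref{ob1}.

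The gap is in the $s=1$ case, which you correctly identify as the crux but then defer. Two things are missing. First, your list of offending triples is not a complete enumeration: the triples $(a,b,c)$ with $a\le b\le c$ that satisfy the $1$-comparable inequalities but violate $a+b+c\le ab$ are precisely the infinite family $(2,n,n)$ for all $n\ge 2$, together with $(3,3,4)$ and $(3,4,6)$ (your list omits $(3,4,6)$ and only samples the infinite family; note that $a=1$ is already impossible for distinct curves since $2c\le b$ and $2b\le c$ force $b=c=0$). Second, because one of these families is infinite, ``rule out each such would-be counterexample'' by a case-by-case divisibility check does not close the argument; you need a uniform statement. The paper's version: if $(x,y)=2$ and the triple is $1$-comparable, then $2(x,z)\le 2(y,z)$ and $2(y,z)\le 2(x,z)$ force $(x,z)=(y,z)$, and with $\vec x=(1,0)$, $\vec y=(1,2)$, $\vec z=(p,q)$ the equality $|q|=|2p-q|$ forces $p=q$ or $p=0$, hence $|q|=1$ by primitivity, contradicting $(x,y)=2$ being the minimum. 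This kills all $(2,n,n)$ at once; the remaining triples $(3,3,4)$ and $(3,4,6)$ are then excluded by the finite coordinate check you describe (the paper does this with tables for $(x,y)=3$, $(y,z)\in\{3,4,5\}$, using that $(x,z)\equiv\pm(x_1,z_1)\bmod (x,y)(y,z)$ for the normalized representatives $x_1,z_1$). With the enumeration made exhaustive and the infinite family handled uniformly, your argument becomes a correct proof along the same lines as the paper's.
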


 Humphries \cite{H1} and Hamidi-Tehrani \cite{HT} have a similar result for subgroups generated by Dehn twists in general surfaces, under more restrictive conditions, than our notion of comparable/proportional geometric intersection numbers. By restricting to the case of the torus, we can improve the bounds in the hypothesis by using linear dependence among the homology classes of curves corresponding the Dehn twists.\\

In general, if $\{x_i\}_{i\in I}$ are simple closed curves not in comparable/proportional geometric intersection numbers, one would hope to be able to find another  collection of simple closed curves $\{y_j\}_{j\in J}$
(with comparable/proportional geometric intersection numbers), so that the subgroup generated by both the collection of powers of Dehn twists are same.
At present, we are able to systematically do this only for three Dehn twist powers (we call this procedure the Euclidean algorithm, see Section 3) and this is the reason we have the hypotheis of three Dehn twist powers in Theorem \ref{thmC}. However, we will describe a procedure (see Section 8)  combining applications of the ping pong lemma and Hurwitz sliding moves, which in some cases can tell us what the subgroup generated by four or more Dehn twist powers are.

We observe that one obtains the same results as above if we replace the torus with the once punctured torus, since their mapping class groups are isomorphic.
In Section 10, we explain how to translate the above results to subgroups generated by Dehn twist powers in the mapping class group of the one-holed torus, or equivalently, the subgroups generated by half twist powers in the braid group on three strands.

\noindent \textit{Organization:} We discuss background material in Section 2. In Section 3, we introduce Euclidean algorithm for simple closed curves in the torus, and discuss a few examples.
In the following section we apply the ping pong lemma to prove Theorem \ref{thmA}. In Sections 5 and 6, we deal with the cases not taken care of by Theorem \ref{thmA}, and prove Theorems \ref{thmB} and  \ref{thmC}. In the next section, we characterize subgroups which contain a pair of (respectively squares of) Dehn twists whose corresponding curve has geometric intersection number two (respectively one). In Section 8, we explain how to show a group is free using both the ping pong lemma and sliding moves, and discuss a few examples where our earlier techniques failed to determine the subgroup. In Section 9, we study subgroups generated by uniform powers of all Dehn twists and prove Theorem \ref{thmD}. In the final section, we mention implications of the above results to subgroups generated by powers of half twists in the braid group $B_3$.\\

\noindent \textit{Acknowledgements}. The author would like to thank John Etnyre and Dan Margalit for useful discussions and making helpful comments on earlier drafts of this paper. This work is partially supported by NSF grants DMS-1608684 and DMS-1906414.

\section{Preliminaries}

For any oriented surface $S$, primitive elements in $H_1(S)$ correspond to oriented essential simple closed curves on $S$. We will consider simple closed curves up to isotopy, hereafter we assume all simple closed curves are essential.

Dehn twists about a curve $c$ is independent of the orientation of $c$ (the only difference is between right and left handedness). This lets us consider a one-to-two correspondence between Dehn twists in Mod($S$) and homology classes of primitive elements in the first homology of $H_1(S)$, where we send $c$ to $\pm \vec c$ (here we pick an orientation on $c$, and let $ \vec c$ denotes its homology class).

Given two oriented simple closed curve $x,y$, let $(x,y)$ denote their geometric intersection number, and let 
$\langle x,y\rangle$  denote their algebraic intersection number. We will denote by $T_x$ the right handed Dehn twist along the simple closed curve $x$. Note that this homeomorphism induces the following automorphism of the first homology group. $T_x(\vec y)=\vec y+\langle x,y\rangle\vec x$, we note that the subgroup generated by $T_x$ and $T_y$ is the same as the subgroup generated by $T_x$ and $T_z$, where $z=T_x(y)$. So given a collection of simple closed curve $x_1,...,x_k$ (or equivalently collection of primitive homology classes of simple closed curve $\vec{x}_1,...,\vec{x}_k$), we would like to do a suitable change of coordinates so that we get a new collection of simple closed curve $\vec{y}_1,...,\vec{y}_k$ so that the subgroup generated by Dehn twists about the curves are the same, and  $\vec{y}_1,...,\vec{y}_k$ are easier to understand (maybe smaller pairwise geometric intersection numbers). We would like to caution the reader that this is not about choosing a different basis of the lattice generated by the original collection of homology classes, indeed some of the new basis vectors may not be primitive elements, and hence would not correspond to simple closed curves.

We recall known results about subgroup generated by two Dehn twists.
\begin{fact} \label{2dt}

(see \cite[Section 3.5]{FM})
 Let us consider the subgroup $G$ of Mod($S$) generated by Dehn twists powers $T_x^s$ and $T_y^t$, with $0<s\leq t$. If $S$ not the torus or once punctured torus, we have:

\begin{enumerate}
    \item $G\cong F_1 $ if and only if $(x,y)=0$ (i.e. $x$ and $y$ are the same upto orientation, and so  $T_x=T_y$),
     \item $G\cong  B_3 $  if and only if $(x,y)=1=s=t$,
     \item $G\cong \langle a,b| abab=baba\rangle$ if and only if $(x,y)=1$ and  $s=1,t=2$,
\item $G\cong \langle a,b| ababab=bababa\rangle$ if and only if $(x,y)=1$ and $s=1,t=3$, 
      \item $G\cong F_2$ otherwise.
\end{enumerate} 
If $S$ is the torus or once punctured torus, we have:
\begin{enumerate}
    \item $G\cong F_1 $ if and only if $(x,y)=0$ (i.e. $x$ and $y$ are the same upto orientation, and so  $T_x=T_y$),
     \item $G\cong SL(2,\mathbb{Z})$ if and only if $(x,y)=1=s=t$,
    \item $G\cong \langle a,b| abab=baba,(b^2a)^4=1\rangle$ if and only if $(x,y)=1$ and $s=1,t=2$,
\item $G\cong \langle a,b| ababab=bababa,(b^3a)^3=1 \rangle$ if and only if $(x,y)=1$ and $s=1,t=3$,
     \item $G\cong F_2$ otherwise.
\end{enumerate} 

\end{fact}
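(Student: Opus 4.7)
\textbf{Proof plan for Fact \ref{2dt}.} The plan is to analyze the subgroup $G$ case by case according to the geometric intersection number $(x,y)$ and the twist powers, using the fact that Dehn twists are supported in regular neighborhoods of their curves. Thus $G$ factors through the mapping class group of a regular neighborhood $N$ of $x\cup y$, whose homeomorphism type is determined by $(x,y)$ and by whether the ambient surface is the (once-punctured) torus. This splits the analysis into a low-intersection regime, where explicit relations arise, and a generic regime, where the ping pong lemma yields a free group.

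For the torus case with $(x,y)=0$, primitivity of $\vec{x},\vec{y}$ together with $\langle \vec{x},\vec{y}\rangle=0$ forces $\vec{x}=\pm\vec{y}$, so $x,y$ are isotopic, $T_x=T_y$, and $G\cong F_1$. For $(x,y)=1$, I would identify $\mathrm{Mod}(\mathbb{T}^2)$ with $SL(2,\mathbb{Z})$ via the action on $H_1$; after a symplectic change of basis, $T_x$ and $T_y$ become the standard generators $\left(\begin{smallmatrix}1 & 1\\0 & 1\end{smallmatrix}\right)$ and $\left(\begin{smallmatrix}1 & 0\\-1 & 1\end{smallmatrix}\right)$ of $SL(2,\mathbb{Z})$, giving $G=SL(2,\mathbb{Z})$ when $s=t=1$. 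For $s=1$ and $t\in\{2,3\}$, a direct matrix computation shows $T_x T_y^t$ has finite order in $SL(2,\mathbb{Z})$ (order $4$ for $t=2$, order $3$ for $t=3$); combined with the suitable consequence of the braid relation, this produces a complete presentation that can be cross-checked against the standard amalgamated-product decomposition $SL(2,\mathbb{Z})=C_4 *_{C_2} C_6$.

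For the non-torus case, I would use the Birman exact sequence (or a direct geometric argument in $N$) to separate the relations coming from $\mathrm{Mod}(N)$ from the boundary twist, recovering the $B_3$ presentation when $s=t=1, (x,y)=1$ and the listed torus-analog presentations (without the torsion relation) for $s=1, t\in\{2,3\}$. For the generic case giving $F_2$, I would apply the ping pong lemma: for the torus, to the $SL(2,\mathbb{R})$-action on $\mathbb{RP}^1$, where the twists $T_x^s, T_y^t$ fix the points corresponding to $\vec{x},\vec{y}$ and act hyperbolically enough (once powers are large or $(x,y)\geq 2$) for sufficiently high iterates to play ping pong on disjoint neighborhoods of those fixed points; for non-torus surfaces, to the action of $\mathrm{Mod}(N)$ on a hyperbolic or curve-complex model of $N$.

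The main obstacle is the sharpness of the dichotomy, i.e.\ verifying that no relation exists outside the listed exceptional cases. On the torus this reduces to the classification of torsion in $SL(2,\mathbb{Z})$ (restricted to orders $2,3,4,6$) together with a check that the only words in $T_x^s, T_y^t$ which become torsion or trivial are the enumerated ones; on other surfaces, injectivity of the presentation in the generic case relies on faithful actions of the candidate groups on auxiliary combinatorial objects associated to $N$, along the lines of Ivanov-McCarthy and Hamidi-Tehrani.
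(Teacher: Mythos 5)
The paper does not prove this statement: it is recorded as a \emph{Fact} with a pointer to \cite[Section 3.5]{FM} (and, via the introduction, to Thurston, Ishida and Ivanov--McCarthy), so there is no internal proof to compare yours against. On its own terms, your outline follows the standard route---pass to a regular neighborhood $N$ of $x\cup y$, compute explicitly in $SL(2,\mathbb{Z})$ or $B_3$ when $(x,y)\le 1$ and the powers are small, and play ping pong otherwise---and the computations you cite are correct: with $T_x=\left(\begin{smallmatrix}1&1\\0&1\end{smallmatrix}\right)$, $T_y=\left(\begin{smallmatrix}1&0\\-1&1\end{smallmatrix}\right)$ one gets $\operatorname{tr}(T_xT_y^2)=0$ and $\operatorname{tr}(T_xT_y^3)=-1$, hence orders $4$ and $3$.

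The difficulty is that the steps you defer are the entire content of the theorem, and two of them are misdescribed rather than merely unfinished. First, in the generic case $T_x^s$ and $T_y^t$ act on $\mathbb{RP}^1$ as powers of \emph{parabolics}, each with a single fixed point; the ``disjoint neighborhoods of fixed points, sufficiently high iterates'' ping pong you invoke does not apply, because the ping pong lemma requires $g_i^p(X_j)\subseteq X_i$ for \emph{every} nonzero $p$, not just large ones, and the generators are the fixed elements $T_x^s,T_y^t$---you are not free to raise them further. The borderline cases $st\,(x,y)^2=4$ (e.g.\ $(x,y)=2$, $s=t=1$; $(x,y)=1$, $s=t=2$; $(x,y)=1$, $s=1$, $t=4$) are exactly where the dichotomy is sharp, and there one needs Sanov-type ping pong sets such as $\{|u|>|v|\}$ and $\{|u|<|v|\}$ in homology (the same sets underlying the proof of Theorem \ref{thmA}), or the classical fact that $\left(\begin{smallmatrix}1&\lambda\\0&1\end{smallmatrix}\right)$ and $\left(\begin{smallmatrix}1&0\\\mu&1\end{smallmatrix}\right)$ generate $F_2$ whenever $|\lambda\mu|\ge 4$. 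Second, ``cross-checking against $C_4*_{C_2}C_6$'' and ``checking that the only words which become torsion or trivial are the enumerated ones'' is circular as stated: classifying the torsion of $SL(2,\mathbb{Z})$ does not identify the relations among two specified generators; to certify completeness of the presentations in cases (2)--(4) you need an actual mechanism, e.g.\ the action on the Bass--Serre tree of the amalgam (equivalently the Farey tree), or injectivity of $\mathrm{Mod}(N)\hookrightarrow\mathrm{Mod}(S)$ for $N$ a one-holed torus with essential boundary, which is also precisely what separates the two lists in the statement. Finally, for $S$ not a torus the case $(x,y)=0$ admits disjoint non-isotopic curves, for which the subgroup is $\mathbb{Z}^2$; your argument (like the parenthetical in the statement) covers only the torus, where primitivity forces $\vec{x}=\pm\vec{y}$.
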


Henceforth (apart from Section 10) we will take the surface to be torus $\mathbb{T}^2$. Let us consider three Dehn twists $T_x$, $T_y$ and $T_z$ in Mod($\mathbb{T}^2$). Since $H_1(\mathbb{T}^2)\cong \mathbb{Z}^2$ (we will always use $\mathbb{Z}$ coefficients), we know that $\vec x, \vec y$ and $\vec z$ are linearly dependent.
In order to understand the subgroup $G$ of Mod($\mathbb{T}^2$) generated by $T_x^s$, $T_y^s$ and $T_z^s$, we will try to understand the subset $H^s_G$ of first homology corresponding to  Dehn twists in $G$. More formally, let us define 
$$H^s_G:=\{\vec x\in  H_1(\mathbb{T}^2)| \vec x \text{ is primitive and } T_x^s\in G \}$$

We note that $H^1_G$ is consists of all primitive elements of $H_1(\mathbb{T}^2)$ if and only if $G=$Mod($\mathbb{T}^2)$.

\subsection{Comparable and proportional geometric intersection numbers}

While neither the notion of $s$-comparable or $s$-proportional directly implies the other, as we will see below overlap in a lot of cases.

\begin{obs}\label{ob1}
For any natural numbers $a,b,s$, we have the inequality $sab\geq 2(a+b)$, unless we are in the following cases:
\begin{enumerate}
    \item $s=1$, $a=3$ and $b\in \{3,4,5\}$,
    \item $s=1$, $b=3$ and $a\in \{3,4,5\}$,
    \item $s=1$, $a<3$ or $b<3$,
    \item $s=2$, $a=1$ or $b=1$,
    \item $s=3$, $a=1$ and $b=1$.
    
\end{enumerate}

\end{obs}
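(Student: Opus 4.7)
The plan is to reduce the three-variable inequality to a clean factored form and then do case analysis on $s$. The key algebraic manipulation: multiplying both sides of $sab \geq 2(a+b)$ by $s$, rearranging, and adding $4$ to complete the rectangle, we see that $sab \geq 2(a+b)$ is equivalent to
\[
(sa-2)(sb-2) \geq 4.
\]
So the task reduces to listing exactly the natural-number pairs $(a,b)$ for which $(sa-2)(sb-2) < 4$, as a function of $s$.

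I would then split into cases on $s$. For $s \geq 4$, both factors satisfy $sa - 2 \geq 2$ and $sb - 2 \geq 2$, so their product is at least $4$, giving no exceptions. For $s = 3$, the factor $3a-2$ is at least $1$, with equality only at $a = 1$; quick case-checking shows the only pair with product $< 4$ is $(a,b) = (1,1)$, yielding case (5). For $s = 2$, the inequality becomes $4(a-1)(b-1) \geq 4$, i.e.\ $(a-1)(b-1) \geq 1$, which fails precisely when $a = 1$ or $b = 1$, giving case (4).

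The bulk of the work is the $s = 1$ case, where the inequality is $(a-2)(b-2) \geq 4$. Here, if either $a < 3$ or $b < 3$ then one factor is non-positive (and the other is an integer $\geq -1$), forcing the product to be at most $1 < 4$; this yields case (3). If instead $a, b \geq 3$, both factors are positive integers, and the product is less than $4$ exactly when the unordered pair $\{a-2, b-2\}$ lies in $\{\{1,1\},\{1,2\},\{1,3\}\}$, giving exactly the pairs described in cases (1) and (2). Assembling these lists exhausts the exception classes and proves the observation; I do not anticipate any genuine obstacle, just the bookkeeping of small cases in the $s=1$ branch.
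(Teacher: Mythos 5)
Your proposal is correct and complete, but it is organized around a different pivot than the paper's argument. The paper assumes WLOG $a\le b$ and runs a direct case analysis on the size of $a$ (the cases $a\ge 4$, $a=3$ with $s=1$, $a\ge 2$ with $s\ge 2$, and $a=1$), bounding $sab$ against $2(a+b)$ in each branch. You instead multiply through by $s$ and complete the rectangle to get the equivalent form $(sa-2)(sb-2)\ge 4$, and then case on $s$. The equivalence is legitimate since $s\ge 1$ makes the multiplication reversible, and your subsequent enumeration (no exceptions for $s\ge 4$; only $(1,1)$ for $s=3$; exactly $a=1$ or $b=1$ for $s=2$; and for $s=1$ the split into ``some factor non-positive'' versus the finitely many positive products $1,2,3$) exactly reproduces the five exceptional classes. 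What the factorization buys you is that the exception list falls out mechanically as the classification of integer pairs whose product is less than $4$, so there is less risk of overlooking a boundary case; what the paper's direct approach buys is that it needs no algebraic identity and each inequality is verified in one line. Both are sound; yours is arguably the more systematic bookkeeping.
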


\begin{proof}
 Without loss of generality let us assume that $a\leq b$.
 If $a\geq 4$, then $sab\geq 4b\geq 2(a+b)$, as $s\geq 1$. 
 If $a=3$, and $s=1$, then $sab\geq2(a+b)$, which is equivalent to $b\geq 2a =6$.
 If $a\geq 2$, then $sab\geq 2sb\geq s(a+b)$, and so the inequality holds as long as $s\geq2$.
 If $a= 1$, $b>1$, the inequality holds for $s\geq 3$.
 If $a=1$, $b=1$, the inequality holds for $s\geq 4$.
\end{proof}

\begin{remark}\label{r9}
If a collection $\{x_i\}_{i\in I}$ of simple closed curve have  $\{s_i\}_{i\in I} $-{comparable} geometric intersection numbers, and for any triple $i,j,k$ we do not get any of the exceptional cases in Observation \ref{ob1} by taking $a=(x_j,x_k)$, $b=(x_k,x_i)$ and $s=s_j$, then the collection $\{x_i\}_{i\in I}$ have $\{s_i\}$-proportional geometric intersection numbers, since
$$(x_i,x_j)+(x_j,x_k)+(x_k,x_i)\leq \frac{s_j}{2}(x_i,x_j)(x_j,x_k)+\frac{s_j}{2}(x_i,x_j)(x_j,x_k)=s_j(x_i,x_j)(x_j,x_k).$$
\end{remark}

In later sections, when we are dealing with uniform $s$-th powers of three Dehn twists,
we will see by the Euclidean algorithm we can find (at most) three simple closed curves which have $s$-comparable geometric intersection numbers, and the $s$-th powers of  Dehn twists about them generate the original subgroup. In most cases, it will follow that the new collection will also have $s$-proportional geometric intersection numbers, and by the  Theorem \ref{thmA} it will follow that $s$-th powers of their Dehn twists generate a free group. We would analyze the exceptional cases separately to figure out what subgroups  $s$-th powers of their Dehn twists generate.


\section{Euclidean Algorithm}
Let us now consider two simple closed curves $x,y$ in the torus with geometric intersection number $n\geq 2$.

By change of coordinates we may assume that the homology class of $x$ is $\spalignmat{1; 0}$, and then the homology class of $y$ (by choosing the sign appropriately) is $\spalignmat{a; n}$, where $a$ is arbitrary. Moreover, by composing with an appropriate power of $T_x$, and using the division algorithm, we may assume that $0<a<n$ (the case $a=0$ does not occur, since it does not correspond to a primitive element). \\

Given three Dehn twists powers $T_x^s$, $T_y^t$ and $T_z^u$, we we describe below an algorithmic procedure to find at most three simple closed curves whose powers of Dehn twists generate the same subgroup. Since this process is similar to the classical Euclidean algorithm to find the greatest common divisor of two integers, we will call this procedure an Euclidean algorithm.

Suppose we are given three pairwise distinct simple closed curve $x,y,z$, and by using linear dependence we write
$\vec z$ as a linear combination of $\vec x$ and  $\vec y$;  $\vec z=\alpha \vec x+ \beta \vec y$ where $\alpha, \beta\in \mathbb{Q}$. Let us see the effect of the $k$-th power of Dehn twist $T_x$. 
$$T_x^k \vec z=\vec z+ k\langle x,z\rangle \vec x =\alpha \vec x+ \beta \vec y+ k\beta \langle x,y\rangle\vec x = (\alpha + k\beta \langle x,y\rangle)\vec x  + \beta \vec y$$
By the (slightly modified) division algorithm, we can find $k\in s\mathbb{Z}$ so the absolute value of the $\vec x$ coefficient of $\vec z_1$ is at most $\frac{s|\beta|(x,y)}{2}= \frac{s(z,x)}{2}$, where we denote by $z_1$ the simple closed curve $T_x^k z$.
Note $(x,z_1)=(x,z)$, and $(y,z_1)<(y,z)$ if $k\neq 0$ (and same otherwise, since then $z_1=z$), and moreover $T_{z_1}^u =T_x^s \circ T_z^u \circ T_x^{-s}$, thus the subgroup generated by $T_x^s, T_y^t, T_z^u$ is the same as the subgroup generated by $T_x^s, T_y^t, T_{z_1}^u$.

So we conclude that if $|\alpha|> \frac{s|\beta|(x,y)}{2}$ , or equivalently $2(y,z)>s (x,y)(z,x)$,
we can apply some power of $T_x^s$ to $z$, to get a new collection of simple closed curve whose respective powers of Dehn twists generate the same group, and one of the pairwise geometric intersection numbers get smaller. We can interchange the roles of $x,y$ and $z$, and keep repeating the process till we get new collection of simple closed curve $x',y', z'$ so that either $$2(x',z')\leq t(x',y')(y',z'), 2(x',y')\leq u(x',z')(y',z'), 2(y',z')\leq s (x',y')(x',z'),$$
i.e. $x',y',z'$ are have $s$-comparable geometric intersection numbers, or at some point some pairwise geometric intersection number became zero, in which case we replace the two Dehn twist powers $T_w^a$ and $T_w^b$ with the Dehn twist $T_w^g$, where $g=\gcd(a,b)$.

Since each of the geometric intersection number is a non-negative integer, and in each iteration one of the pairwise geometric intersection number (or the number of Dehn twist powers) goes down, we see that this process  has to stop in finite number of steps. 

In case $s=t=u$, the result of the above algorithm will be collection of simple closed curve with $s$-comparable geometric intersection numbers has the property that the $s$-th powers of Dehn twists along them generate the same subgroup as the original one. One may hope to find a similar algorithm for the case of more than three simple closed curve and then we could figure out exactly which subgroup a collection of $s$-th powers of Dehn twists generate. See Example \ref{tricky} below to see why this may be tricky.

\subsection{Examples}

We discuss a few examples, illustrating the Euclidean algorithm, and assuming Theorem \ref{thmB}, figure out which subgroups are generated. 

\begin{example} \label{eg1}
If $\vec x= \vec y=\vec z= \spalignmat{1; 0} $, then the subgroup generated by $T_x$, $T_y$, $T_z$ is isomorphic to $F_1$.
\end{example}

\begin{example} \label{eg2}  
If $\vec x= \spalignmat{1; 0},\vec y= \spalignmat{1; 3},\vec z= \spalignmat{-11; 3}$, we can set
 $$\vec x_1= \spalignmat{1; 0},\vec y_1= \spalignmat{1; 3},\vec z_1=T_x^{4}\vec z= \spalignmat{1; 3};$$
 Since $\vec y_1=\vec z_1$, we only consider $x_1$ and $y_1$, which have geometric intersection number 3, and so the subgroup generated by $T_x$, $T_y$ and $T_z$ is isomorphic to $F_2$ by Fact \ref{2dt}.
\end{example}

\begin{example} \label{eg3} If $\vec x= \spalignmat{1; 0},\vec y= \spalignmat{4; 3},\vec z= \spalignmat{1; 6}$, we can set
    $$\vec x_1= \spalignmat{1; 0},\vec y_1=T_x^{-1}\vec y= \spalignmat{1; 3},\vec z_1= \spalignmat{1; 6};$$
    $$\vec x_2= \spalignmat{1; 0},\vec y_2= \spalignmat{1; 3},\vec z_2=T_y^{-1}\vec z= \spalignmat{-2; 3}$$
    Now $x_2,y_2,z_2$ have 1-comparable geometric intersection numbers, and by the following section subgroup generated by $T_x$, $T_y$ and $T_z$ is isomorphic to $F_3$ by Theorem \ref{thmB}.

\end{example}

\begin{example} \label{eg4} If $\vec x= \spalignmat{1; 0},\vec y= \spalignmat{7; 3},\vec z= \spalignmat{1; 4}$, we can set
     $$\vec x_1= \spalignmat{1; 0},\vec y_1=T_x^{-2}\vec y= \spalignmat{1; 3},\vec z_1= \spalignmat{1; 4};$$
     $$\vec x_2= \spalignmat{1; 0},\vec y_2= \spalignmat{1; 3},\vec z_2=T_y^{-1}\vec z_1= \spalignmat{0; 1};$$
     $$\vec x_3= \spalignmat{1; 0},\vec y_3= T_z^{-3}\vec y_2=\spalignmat{1; 0},\vec z_3= \spalignmat{0; 1};$$
      Since $\vec y_3=\vec x_3$, we only consider $x_3$ and $z_3$, which have geometric intersection number 1, and so the subgroup generated by $T_x$, $T_y$ and $T_z$ is Mod$(\mathbb{T}^2)$ by Fact \ref{2dt}.

\end{example}

  \begin{example}\label{tricky} If $\vec x= \spalignmat{1; 0},\vec y= \spalignmat{1; 3},\vec z= \spalignmat{1; 10},\vec w= \spalignmat{3; 17}$;we see that the three triples of simple closed curve $x,y,z$; $x,w,z$  and $y,z,w$ have 1-comparable geometric intersection numbers, and $x,y,w$ are not. Consider
  $$\vec x_1= \spalignmat{1; 0},\vec y_1= \spalignmat{1; 3},\vec z_1= \spalignmat{1; 10},\vec w_1= T_y^{-1} \vec w= \spalignmat{3; 17}-8\spalignmat{1; 3}=\spalignmat{-5; -7};$$
  Now we see that $x_1,y_1,w_1$ have 1-comparable geometric intersection numbers but $y_1,z_1,w_1$ do not have 1-comparable geometric intersection numbers.
   So given four or more simple closed curve it seems unlikely that one can come up with a similar algorithm in general, because making some pairwise geometric intersection number smaller by updating an simple closed curve with some power of a Dehn twist, may increase some other pairwise geometric intersection number. However we will show in Example \ref{eg7} that  $T_x,T_y,T_z$ and $T_w$ generate the free group $F_4$, using a procedure which combines application of the ping pong lemma with Hurwitz sliding moves.
  \end{example}

\section{Ping Pong}

To prove the freeness of the subgroup in Theorem 1, we will use the ping pong lemma, which is a criterion to show a group is free by understanding how it acts on a set.

\begin{fact}(Ping pong lemma \cite[Lemma 3.15]{FM}) Suppose a group $G$ acts on a set $X$. Suppose we have a subset $\{g_i\}_{i\in  I}$  (with $I$ having cardinality at least two) of $G$ and there are disjoint non-empty subsets $\{X_i\}_{i\in  I}$ of $X$ so that for any $p\neq 0$, we have $g_i^p(X_j)\subseteq X_i$ whenever $i\neq j$. Then the set $\{g_i\}_{i\in  I}$ freely generate a subgroup of $G$.
\end{fact}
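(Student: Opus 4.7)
The plan is to show that every non-trivial reduced word in the alleged generators $\{g_i\}_{i\in I}$ acts non-trivially on $X$, so in particular is not the identity in $G$. A non-trivial reduced word takes the form $w = g_{i_1}^{p_1} g_{i_2}^{p_2} \cdots g_{i_n}^{p_n}$ with $n \geq 1$, consecutive indices $i_k \neq i_{k+1}$, and all exponents $p_k \neq 0$; freeness of $\{g_i\}_{i\in I}$ is equivalent to showing each such $w$ is distinct from the identity.

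First I would handle the ``matched endpoints'' case $i_1 = i_n$, which automatically covers $n=1$. Using $|I| \geq 2$, pick $j \in I$ with $j \neq i_1$ and any point $x \in X_j$. Reading $w$ right-to-left and applying the ping pong hypothesis repeatedly gives $g_{i_n}^{p_n}(X_j) \subseteq X_{i_n}$ (since $j \neq i_n$), then $g_{i_{n-1}}^{p_{n-1}}(X_{i_n}) \subseteq X_{i_{n-1}}$ (since $i_{n-1} \neq i_n$), and iterating yields $w(X_j) \subseteq X_{i_1}$. But $X_{i_1} \cap X_j = \emptyset$, so $w(x) \neq x$ and $w \neq e$.

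For the unmatched case $i_1 \neq i_n$, I would use the standard conjugation trick: set
$$w' := g_{i_n}^{-p_n} \, w \, g_{i_n}^{p_n} = g_{i_n}^{-p_n} g_{i_1}^{p_1} g_{i_2}^{p_2} \cdots g_{i_{n-1}}^{p_{n-1}} g_{i_n}^{2p_n}.$$
Because $i_n \neq i_1$ and $i_{n-1} \neq i_n$, this is again a reduced word, now of length $n+1$ with first and last indices both equal to $i_n$; the matched-endpoints case shows $w' \neq e$, and since $w$ and $w'$ are conjugate, $w \neq e$.

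The only place where some care is required is the second step. A naive right-to-left ping pong applied directly to $w$ would force us to choose some $j$ different from both $i_1$ and $i_n$, which need not exist when $|I|=2$. The conjugation trick circumvents this obstruction and is precisely the reason the hypothesis $|I|\geq 2$ (not $\geq 3$) suffices; this is the only real subtlety in the argument.
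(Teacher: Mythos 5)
Your proof is correct: the matched-endpoints case is handled by a valid right-to-left ping pong (the hypothesis $g_{i}^{p}(X_j)\subseteq X_i$ applies at every step because the word is reduced and because $j\neq i_1=i_n$), and the conjugation $w\mapsto g_{i_n}^{-p_n}wg_{i_n}^{p_n}$ legitimately reduces the unmatched case to it while keeping the word reduced; the argument also covers infinite $I$ directly, since any single word involves only finitely many generators, which is exactly the reduction the paper notes in the remark following the Fact. The paper itself offers no proof, citing \cite[Lemma 3.15]{FM} instead, and your argument is precisely the standard one given there, so there is nothing to add.
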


The lemma is usually stated and proved when $I$ is a finite set. However note that if there is a relation, it only involves finitely many elements in the collection $\{g_i\}_{i\in  I}$, and so the above version of the ping pong lemma reduces to the case where the indexing set is finite.

\begin{proof} [Proof of Theorem \ref{thmA}]
For $ i\in I$, consider the sets 
$$ N_i:= \{c\text{ simple closed curve in } \mathbb{T}^2| (c,x_i)< (c,x_j)\text{ for all }j \in I \setminus \{i\} \}$$
We observe that the sets are disjoint, and non-empty (since $x_i\in N_i$).
 By the ping pong lemma, the theorem will be proved, if we show for $p\in \mathbb{Z}\setminus \{0\}$, any two distinct indices $i,j$, and any $c\in N_i$, $T_{x_j}^{s_jp}(c)\in N_j$. To establish this, we need to show $(T_{x_j}^{s_jp}(c),x_j)<(T_{x_j}^{s_jp}(c),x_k)$ for any $k\neq j$.

For notational convenience, suppose $x=x_i$, $y=x_j$, $z=x_k$, $N_x=N_i,N_y=N_j,N_z=N_k$ and $s=s_j$. Since $x,y$ are distinct simple closed curves; $\vec x$ and $\vec y$ form a basis for $\mathbb{Q}^2$ over the rational numbers, we can write $\vec z$ as a rational linear combination 
$$\vec z=\alpha \vec x+ \beta \vec y.$$

Let $c\in N_x$, want to show $T_y^{sp}c\in N_y$, so we need $(T^{sp}_y c,x)>(T^{sp}_y c,y)$ and $(T^{sp}_y c,z)>(T^{sp}_y c,y)$.
Note that $ T^{sp}_y\vec c=\vec c+sp\langle y,c\rangle \vec y$,
\text{and hence   }$\langle T^{sp}_y c,x\rangle =\langle c,x\rangle +s p\langle y,c\rangle  \langle y,x\rangle.$
\text{Thus we see    }$$(T^{sp}_y c,x)\geq -(c,x)+ s(y,c)(y,x)\geq -(c,x)+2(c,y) > (c,y)=(T^{sp}_y c,y)$$

Now  we want to show $(T^{sp}_y c,z)>(T^{sp}_y c,y)=(c,y)$. We observe that $\langle T^{sp}_y c,z\rangle=\langle c,z\rangle + sp\langle y,c\rangle \langle y,z\rangle$, and therefore
$$(T^{sp}_y c,z)\geq -(c,z)+ s(c,y)(y,z)\geq -|\alpha| (c,x)-|\beta| (c,y)+ s(c,y)(y,z)>(c,y)(s(y,z)-|\alpha|-|\beta|).$$

We want the rightmost expression in the above chain of inequalities to be at least $(c,y)$, and for this it suffices to have 
$s(y,z) \geq|\alpha|+|\beta|+1 $, or equivalently by multiplying both sides with $(x,y)$

$$ s(x,y)(y,z)\geq |\alpha|(x,y)+|\beta|(x,y)+(x,y)= (y,z)+(x,z)+(x,y) $$

Recalling our notation for $x,y,z$ and $s$, we see this is the same inequality as in the hypothesis.

\end{proof}

\begin{example}If $\vec x= \spalignmat{1; 0}$,  $\vec y= \spalignmat{0; 1}$,  $\vec z= \spalignmat{1; 1}$. By Theorem \ref{thmA}  we see that $T_x^p, T_y^q$ and $T_z^r$ generates a free group $F_3$ as long as $p,q,r$ are all at least 3.
\end{example}

\section{The remaining cases to consider for Theorems \ref{thmB} and \ref{thmC} }

In this section we will consider all the cases we cannot directly apply Observation \ref{ob1} and Theorem \ref{thmA} to prove Theorems \ref{thmB} and \ref{thmC}. Let us suppose $x,y, z$ are three simple closed curves with $s$-comparable geometric intersection numbers.  Without loss of generality let us assume that $(x,y)\leq (y,z)\leq (z,x)$.

\begin{obs} For $x,y,z$ and $s$ as above, if $s(x,y)=2$, then $(y,z)=(x,z)$.
\end{obs}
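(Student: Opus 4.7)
The plan is to simply unpack the definition of $s$-comparable geometric intersection numbers and apply it twice, with two different choices of the ``middle'' index. By Definition \ref{cmp}, for any three distinct indices $i,j,k$ we have
$$2(x_i,x_j)\leq s(x_i,x_k)(x_k,x_j).$$
Specializing to the triple $\{x,y,z\}$, I would write out the three instances (one for each choice of middle index) and then use the hypothesis $s(x,y)=2$ to simplify the two instances in which the factor $(x,y)$ appears on the right-hand side.

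Concretely, first I would take $(i,j,k)=(x,z,y)$, which gives
$$2(x,z)\leq s(x,y)(y,z)=2(y,z),$$
so $(x,z)\leq (y,z)$. Next I would take $(i,j,k)=(y,z,x)$, which gives
$$2(y,z)\leq s(y,x)(x,z)=s(x,y)(x,z)=2(x,z),$$
so $(y,z)\leq (x,z)$. Combining the two inequalities yields $(y,z)=(x,z)$, as desired.

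There is essentially no obstacle here; the content of the observation is just that when $s(x,y)$ attains the smallest possible value making the $s$-comparable inequalities with $k=x$ and $k=y$ meaningful, both of those inequalities become sharp relations between $(x,z)$ and $(y,z)$ in opposite directions, forcing equality. The ordering hypothesis $(x,y)\leq(y,z)\leq(z,x)$ is not actually needed for the argument, but it is consistent with the conclusion.
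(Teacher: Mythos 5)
Your proof is correct and is essentially identical to the paper's: both derive $(x,z)\leq(y,z)$ and $(y,z)\leq(x,z)$ from the two $s$-comparability inequalities $2(x,z)\leq s(x,y)(y,z)$ and $2(y,z)\leq s(x,y)(x,z)$, using $s(x,y)=2$. Your remark that the ordering hypothesis is not needed is also accurate.
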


This follows directly from the inequalities
$$2(x,z)\leq s(x,y)(y,z), \quad 2(y,z)\leq s (x,y)(x,z).$$ 

We now observe that $s(x,y)=2$ can only occur in the following two situations:
\begin{enumerate}
    \item $s=1$ and $(x,y)=2$:  By a change of coordinates, we may assume that $\vec x= \spalignmat{1; 0}$ and  $\vec y= \spalignmat{1; 2}.$ Assuming $\vec z= \spalignmat{p; q}$, we see that $(x,z)=|q|$, and $(y,z)=|2p-q|$. For $(x,z)=(y,z)$, we must either have $q=2p-q$ ($\Leftrightarrow p=q$), or $-q=2p-q$  ($\Leftrightarrow p=0$). In either case we see that for $\vec z$ to be primitive we need to have $|q|=1$ contradicting our assumption that $x,y,z$ are 1 -comparable ; and also $(x,y)<(x,z)$. Moreover, we know the subgroup generated in this case is Mod$(\mathbb{T}^2)$.

    \item $s=2$ and $(x,y)=1$: By change of co-ordinates we may assume that $\vec x=\spalignmat{1; 0}$ and  $\vec y=\spalignmat{0; 1}$, and let us suppose $\vec z=\spalignmat{p; q}$. In this case we have $(x,y)=1$, $(y,z)=|p|$ and $(x,z)=|q|$.
Thus $|p|=|q|$, and so the only way $\vec z$ can be primitive is $|p|=|q|=(x,z)=(y,z)=1$.
\end{enumerate}

By the above discussion and Observation \ref{ob1}, 
the remaining cases to check are:
\begin{enumerate}
    \item  $(x,y)=3$ and $(y,z)\in \{3,4,5\}$; and $s=1$,
    \item  $(x,y)=1=(y,z)$; and $s\in\{2,3\}$. 
\end{enumerate}

We will deal with them in the following two subsections.

\subsection{Three Dehn twists}

By change of coordinates, let us assume $\vec y=\spalignmat{1;0}.$ In what follows, we will talk about choices for a second vector $\vec w$ with geometric intersection number with $\vec y$ being $n$. By change of coordinates, and up to taking negatives and remainder modulo $\vec y$ (i.e. apply a power of Dehn twist about $y$), for $w$ a simple closed curve with $(w,y)=n$ we will assume without loss of generality that $\vec w=\spalignmat{a; n}$, with $1< a < n$. Given some $x,y,z$ with $\vec y=\spalignmat{1; 0},$ it may not be possible just by a change of coordinates to send $x$ to $x_1$
and $z$ to $z_1$ fixing $y$, where  $\vec x_1$ and $\vec z_1$ have the form mentioned in the previous line, however the subgroup generated by $x,y,z$ is the same as the subgroup generated by $x_1,y,z_1$.

\begin{itemize}
    \item Let us consider the case $(x,y)=(y,z)=3$.\\
    The choices for $\vec x$ and $\vec z$ are $\spalignmat{1; 3}$ and $\spalignmat{2; 3}$. Note that if $\vec x=\vec z $, then $T_x=T_z$ and $T_x,T_y$ and $T_z$ generate the free group $F_2$.
\begin{center}
  \begin{tabular}{ |c | c|c|c| c| c |c|}
    \hline
    $\vec x$ & $\vec  z$ & $(z,x)$ & $(z,x)$ largest &  $ (x,y)+(y,z)+(z,x)$ & 1-comparable  &  1-proportional \\ \hline
    $\spalignmat{1; 3}$ & $\spalignmat{2; 3}$ &   $3$ & Yes & $3+3+3=9$  &Yes & Yes  \\ \hline
     $\spalignmat{2; 3}$ & $\spalignmat{1; 3}$  &   $3$ & Yes & $3+3+3=9$ & Yes & Yes\\\hline
  \end{tabular}
\end{center}
We conclude that in the case $(x,y)=(y,z)=3$ the collection $x,y$ and $z$ are 1-proportional if they are 1-comparable.

\item Let us consider the case $(x,y)=3$ and $(y,z)=4$.\\
The choices for $\vec x$ are $\spalignmat{1; 3}$ and $\spalignmat{2; 3}$, and the choices for $\vec z$ are $\spalignmat{1; 4}$ and $\spalignmat{3; 4}$.

\begin{center}
  \begin{tabular}{ |c | c|c|c| c| c |c|}
    \hline
    $\vec x$ & $\vec  z$ & $(z,x)$ & $(z,x)$ largest &  $ (x,y)+(y,z)+(z,x)$ & 1-comparable  &  1-proportional \\ \hline
    $\spalignmat{1; 3}$ & $\spalignmat{1; 4}$ &   $1$ & No & $3+4+1=8$  & No & No  \\ \hline
     $\spalignmat{1; 3}$ & $\spalignmat{3; 4}$   & $5$ & Yes &  $3+4+5=12$ & Yes & Yes\\\hline
    $\spalignmat{2; 3}$ & $\spalignmat{1; 4}$    & $5$ & Yes &  $3+4+5=12$ & Yes & Yes\\\hline
    $\spalignmat{2; 3}$ & $\spalignmat{3; 4}$ &   $1$ & No & $3+4+1=8$  & No & No  \\ \hline
  \end{tabular}
\end{center}
We conclude that in the case $(x,y)=3$ and $(y,z)=4$ the collection $x,y$ and $z$ are 1-proportional if they are 1-comparable.\\

\item Let us consider the case $(x,y)=3$ and $(y,z)=5$.\\
The choices for $\vec x$ are $\spalignmat{1; 3}$ and $\spalignmat{2; 3}$, and for $\vec z$ are $\spalignmat{1; 5}$, $\spalignmat{2; 5}$, $\spalignmat{3; 5}$ and $\spalignmat{4; 5}$.

\begin{center}
  \begin{tabular}{ |c | c|c|c| c| c |c|}
    \hline
    $\vec x$ & $\vec  z$ & $(z,x)$ & $(z,x)$ largest &  $ (x,y)+(y,z)+(z,x)$ & 1-comparable  &  1-proportional \\ \hline
   $\spalignmat{1; 3}$ & $\spalignmat{1; 5}$ &   $2$ & No & $3+5+2=10$  & No & No  \\ \hline
   $\spalignmat{1; 3}$ & $\spalignmat{2; 5}$ &   $1$ & No & $3+5+1=9$  & No & No  \\ \hline
   $\spalignmat{1; 3}$ & $\spalignmat{3; 5}$ &   $4$ & No & $3+5+4=12$  & Yes & Yes  \\ \hline
   $\spalignmat{1; 3}$ & $\spalignmat{4; 5}$ &   $7$ & Yes & $3+5+7=15$  & Yes & Yes  \\ \hline
   $\spalignmat{2; 3}$ & $\spalignmat{1; 5}$ &   $7$ & Yes & $3+5+7=15$  & Yes & Yes \\ \hline
   $\spalignmat{2; 3}$ & $\spalignmat{2; 5}$ &  $4$ & No & $3+5+4=12$  & Yes & Yes  \\ \hline
   $\spalignmat{2; 3}$ & $\spalignmat{3; 5}$ &   $1$ & No & $3+5+1=9$  & No & No  \\ \hline
   $\spalignmat{2; 3}$ & $\spalignmat{4; 5}$ &   $2$ & No & $3+5+2=10$  & No & No  \\ \hline
  \end{tabular}
\end{center}
We conclude that in the case $(x,y)=3$ and $(y,z)=5$ the collection $x,y$ and $z$ are 1-proportional if they are 1-comparable.\\

\end{itemize}


\subsection {Two geometric intersection numbers are 1}

Suppose we have three simple closed curves $x,y$ and $z$, with $(x,y)=1=(y,z)$. If $x,y,z$ are $s$-comparable, then $2(x,z)\leq s(x,y)(y,z)=s$. For $s\in \{2,3\}$, the possibilities for $(x,z)$ are 0 and 1. If $(x,z)=0$, then
we have $T_x=T_z$, and hence $T_x^s,T_y^s$ and $T_z^s$ generate the free group $F_2$. So we only need to consider the case $(x,z)=1$.

For $s=3$, we see that $x,y,z$  have 3-proportional geometric intersection numbers, and so $T_x^3$, $T_y^3$ and $T_z^3$ generate the free group $F_3$.

For $s=2$, we see that $x,y,z$ do not have 3-proportional geometric intersection numbers, so we need a more careful analysis.
By change of coordinates we may assume that the homology classes of the curves are: $\vec x= \spalignmat{1; 0}$,  $\vec y= \spalignmat{0; 1},$ and then $\vec z$ has to be
$\spalignmat{\pm 1; \pm 1}$.
Note that $$T_y^2\spalignmat{ 1;  1}=\spalignmat{ 1;  -1},
T_x^2\spalignmat{ 1;  -1}=\spalignmat{ -1;  -1}, T_y^2\spalignmat{ -1;  -1}=\spalignmat{ -1;  1}, T_x^2\spalignmat{ -1;  1}=\spalignmat{ 1;  1}.$$
By applying sufficient powers of $T_x^2$ and $T_y^2$ to $z$, we may assume that  $\vec z= \spalignmat{1; 1}$.

\begin{claim}\label{csquares} With $x,y,z$ as above, the subgroup generated by $T_x^2$, $T_y^2$ and $T_z^2$ is $F_2\times C_2$. 
\end{claim}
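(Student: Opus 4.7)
The plan is to prove $G := \langle T_x^2, T_y^2, T_z^2\rangle$ equals $\langle -I, T_x^2, T_y^2\rangle$ and then recognize the latter as an internal direct product of a cyclic group of order two and a free group of rank two. I would first pass to the matrix model $\mathrm{Mod}(\mathbb{T}^2)\cong SL(2,\mathbb{Z})$. Applying the action $T_v(\vec u)=\vec u+\langle v,u\rangle \vec v$ to the given coordinates yields
\[
T_x^2=\begin{pmatrix}1&2\\0&1\end{pmatrix},\qquad T_y^2=\begin{pmatrix}1&0\\-2&1\end{pmatrix},\qquad T_z^2=\begin{pmatrix}-1&2\\-2&3\end{pmatrix}.
\]
All three are congruent to $I$ modulo $2$, so $G$ already lies in the principal congruence subgroup $\Gamma(2)$; the rest of the argument will in fact identify $G$ with $\Gamma(2)$.

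The key step is the explicit identity
\[
T_z^2\cdot T_x^2\cdot T_y^2 \;=\; -I,
\]
verified by direct matrix multiplication. Reading this as $T_z^2=-I\cdot T_y^{-2}T_x^{-2}$ shows simultaneously that $-I\in G$ and that $T_z^2\in \langle -I,T_x^2,T_y^2\rangle$, so $G=\langle -I, T_x^2, T_y^2\rangle$.

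To finish, note that by the $s=t=2$, $(x,y)=1$ instance of the ``otherwise'' case of Fact \ref{2dt}, the subgroup $F:=\langle T_x^2, T_y^2\rangle$ is freely generated by $T_x^2$ and $T_y^2$, and in particular is torsion-free. Hence the order-two element $-I$ does not lie in $F$, so $\langle -I\rangle\cap F=\{I\}$; and since $-I$ is central, $\langle -I\rangle$ commutes with $F$. These two facts give the internal direct product decomposition
\[
G=\langle -I\rangle\times F\;\cong\; C_2\times F_2,
\]
as claimed.

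The only real obstacle is locating a short word in the three squared twists that evaluates to $-I$; once $-I\in G$ is established, centrality of the scalar matrix, torsion-freeness of $F$, and the previously recorded freeness of $\langle T_x^2,T_y^2\rangle$ assemble the direct product decomposition mechanically.
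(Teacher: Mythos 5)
Your proof is correct and follows essentially the same route as the paper: both identify the subgroup with $\langle -I, T_x^2, T_y^2\rangle$ via the relation $T_z^2T_x^2T_y^2=-I$ (the paper writes it as $T_x^2T_y^2=\iota T_z^{-2}$, derived by observing that $T_x^2T_y^2$ negates $\vec z$ and then checking its action on $\vec y$, rather than by direct matrix multiplication), and both conclude with centrality of $-I$ together with the freeness of $\langle T_x^2,T_y^2\rangle$ from Fact \ref{2dt}. Your observation that torsion-freeness of the free group immediately rules out $-I\in\langle T_x^2,T_y^2\rangle$ is a slightly cleaner justification than the paper's parenthetical remark, but the argument is the same.
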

\noindent The claim is proved in \cite[Section 3]{BM} in the language of matrices, and we include a proof for completeness.
\begin{proof}
Since $T_x^{2} T_y^{2}(\vec z)=-\vec z $,  $T_z$ commutes with $T_x^{2} T_y^{2}$, and thus $T_x^{2} T_y^{2}$ must be some power of $T_z$ composed with the hyperelliptic involution $\iota$. In fact, we see that $T_x^{2} T_y^{2}=\iota T_z^{-2}$ by looking at the action on $\vec y$. Thus the subgroup generated by $T_x^2$, $T_y^2$ and $T_z^2$ is the same as the subgroup generated by $T_x^2$, $T_y^2$ and $\iota$. We know that $T_x^2$ and $T_y^2$ generate the free group $F_2$, which does not contain $\iota$ (otherwise there would be relations). Also, $\iota$ is a central element in Mod$(\mathbb{T}^2)$ and generates the cyclic group $C_2$. It follows that the subgroup generated by $T_x^2$, $T_y^2$ and $T_z^2$ is an internal direct product of the above two subgroups, and the result follows.
\end{proof}

\section{Combining Euclidean algorithm with the case analysis}
In this section we combine the ideas from previous sections to prove Theorems \ref{thmB} and \ref{thmC}.

\subsection{Comparable versus Proportional Geometric Intersection numbers}
We saw in the last section that there are simple closed curves $x,y,z$  with
2-comparable geometric intersection numbers, but do not have 2-proportional geometric intersection numbers; where
$\vec x=\spalignmat{1; 0}$,  $\vec y=\spalignmat{0; 1}$ and $\vec z= \spalignmat{ 1; 1}$.

One can also see that there are natural numbers $a,b,c,s$ which satisfy the $s$-comparable inequalities ($2a\leq bc$, $2b\leq ca$, $2c\leq ab$) but not the $s$-proportional inequalities ($a+b+c\leq s \min \{ab,bc,ca\}$), such as:
\begin{enumerate}
  \item  $a=2,b=n,c=n$ (for any $n>2$) and $s=1$; 
  \item  $a=1,b=n,c=n$ (for any $n>2$) and $s=2$;
  \item $a=3,b=4,c=6$ and $s=1$.
\end{enumerate}
However, as explained below, we cannot have three simple closed curves whose pairwise geometric intersection numbers realize the above $a,b,c$.
Suppose $x,y,z$ are simple closed curves with $s$-comparable geometric intersection numbers;
and as in the previous section we will assume $(x,y)\leq (y,z)\leq (z,x)$ and $\vec y=\spalignmat{1; 0}$.
For $s=1$, $\vec {x_1}=k(x,y)\vec y+ \vec x$ and $\vec {z_1}=l(z,y)\vec y+ \vec z$,
where $k,l\in\mathbb{Z}$, and $x_1$ and $z_1$ are the various vectors appearing in the tables of Subsection 6.1. It follows that
$$\langle x_1,z_1\rangle=k(x,y)\langle y,z \rangle+l(y,z)\langle x,y \rangle+\langle x,z\rangle= m(x,y)(y,z)+\langle x,z\rangle$$ for some $m\in \mathbb{Z}$.
Consequently $(x,z)$ is $|(x_1,z_1)\pm m(x,y)(y,z)|$. Looking at the tables, and recalling that $2(x,z)\leq (x,y)(y,z)$; it follows that $(x,z)$ is either one of the geometric intersection numbers appearing in that table; or  $(x,y)(y,z)$ minus such an intersection number appearing in the table. In all these cases it follows that $x,y,z$ have 1-proportional geometric intersection numbers whenever they have 1-comparable geometric intersection numbers. So we conclude:

\begin {prop}\label{p1} For $s\in \mathbb{N}\setminus\{2\}$, a collection of distinct simple closed curves have $s$-proportional geometric intersection numbers whenever they have $s$-comparable geometric intersection numbers.
\end{prop}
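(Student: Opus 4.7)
The plan is to verify, for each unordered triple $\{x_i, x_j, x_k\}$ in the collection, the $s$-proportional inequalities, exploiting two existing ingredients: Remark \ref{r9}, which turns $s$-comparable into $s$-proportional whenever the relevant pair of intersection numbers avoids the exceptional list in Observation \ref{ob1}; and the enumeration of primitive homology classes with prescribed intersection with a normalized curve. For a given triple I relabel so that $(x,y) \leq (y,z) \leq (z,x)$. Under this ordering the tightest of the three proportional inequalities is the one with middle index $y$, namely $(x,y) + (y,z) + (z,x) \leq s(x,y)(y,z)$, and verifying it implies the other two. So it suffices to establish this single inequality in each exceptional situation.

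For $s \geq 4$ there are no exceptional pairs in Observation \ref{ob1}, so Remark \ref{r9} closes the argument outright. For $s = 3$ the only exception is $a = b = 1$, meaning $(x,y) = (y,z) = 1$; combined with the ordering $(z,x) \geq (y,z) = 1$ and the middle-$y$ comparable inequality $2(z,x) \leq 3$, we get $(z,x) = 1$, and the proportional inequality reduces to $3 \leq 3$.

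The substantive case is $s = 1$, where the exceptional pairs are $\min\{a,b\} \leq 2$ together with $(a,b) \in \{3\} \times \{3,4,5\}$. First I would rule out $(x,y) \in \{1, 2\}$. For $(x,y) = 1$ the middle-$y$ comparable inequality $2(z,x) \leq (y,z)$ together with $(z,x) \geq (y,z)$ is already contradictory. For $(x,y) = 2$ the comparable inequalities force $(y,z) = (z,x)$, and then the coordinate analysis recalled just before Subsection 6.1 (with $\vec y = \spalignmat{1; 2}$ and $\vec z = \spalignmat{p; q}$) shows that $\vec z$ being primitive forces $|q| = 1$, contradicting $(y,z) \geq (x,y) = 2$. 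The remaining exceptional cases $(x,y) = 3$ and $(y,z) \in \{3, 4, 5\}$ are settled by the tables in Subsection 6.1: after normalizing $\vec y = \spalignmat{1; 0}$ and absorbing powers of $T_y$ into $\vec x$ and $\vec z$, the tables enumerate every primitive pair realizing the prescribed intersections with $\vec y$, and a row-by-row inspection confirms that 1-comparable implies 1-proportional. The main obstacle is ensuring that this finite enumeration actually captures every 1-comparable triple; the key point is the congruence $\langle x, z\rangle \equiv \langle x_1, z_1\rangle \pmod{(x,y)(y,z)}$ paired with the comparable bound $2(x,z) \leq (x,y)(y,z)$, which pin down $(x,z)$ to be either a tabulated value or $(x,y)(y,z)$ minus such a tabulated value.
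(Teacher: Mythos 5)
Your proposal is correct and follows essentially the same route as the paper: Remark \ref{r9} and Observation \ref{ob1} for the generic case, elimination of the small-intersection exceptional cases via primitivity of the homology classes, and the finite tables (with the congruence modulo $(x,y)(y,z)$ certifying that the enumeration is exhaustive) for $s=1$ and $(x,y)=3$. Your explicit reduction to the single tightest inequality per triple, and your direct treatment of the $s=1$, $(x,y)=1$ case, are slightly more careful than the paper's write-up, which leaves both points implicit.
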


\begin{proof}
It follows from Remark \ref{r9} that if we are not in any of the exceptional cases in Observation \ref{ob1}, that  $s$-comparable implies $s$-proportional geometric intersection numbers. We considered the case $s=1$ in Section 6.2 and the preceding paragraph; and we looked at the case $s=3$ in Section 6.2 and again saw that the same conclusion holds (while there are natural numbers that violate the inequalities, they do not arise as geometric intersection number of simple closed curves).
\end{proof}

\begin{remark}
Note that we can have simple closed curves which have $s$-proportional geometric intersection numbers but not $s$-comparable geometric intersection numbers, for example:
\begin{enumerate}
    \item $\vec x=\spalignmat{1; 0}$,  $\vec y=\spalignmat{4; 5},\vec z= \spalignmat{ 1; 5} $ and $s=1$.
     \item $\vec x=\spalignmat{1; 0}$,  $\vec y=\spalignmat{4; 5},\vec z= \spalignmat{ -2; 5} $ and $s=2$.
\end{enumerate}
It is not too hard to construct lots of such examples as long as all pairwise geometric intersection numbers are at least 5. Thus we see that neither notions of $s$-proportional or $s$-comparable is weaker than the other.
\end{remark}

\begin{proof} [Proof of Theorem \ref{thmB}] This follows immediately from Proposition \ref{p1} and Theorem \ref{thmA}. 
\end{proof}

\subsection{Applications of Theorem \ref{thmB}}

\begin{example} For any natural number $p\geq 4$, consider $\vec x_k=\spalignmat{1; pk}$, and let $x_k$ denote the corresponding simple closed curve.  It follows from Observation \ref{ob1} that  the collection $\{ x_k\}_{k\in \mathbb{Z}}$ have 1-comparable geometric intersection numbers, and consequently, $\{ T_{x_k}\}_{k\in \mathbb{Z}}$ freely generate a subgroup isomorphic to $F_\infty$.

\end{example}

\begin{example} \label{eg} Consider $\vec y_k=\spalignmat{k; 1}$, and let $y_k$ denote the corresponding simple closed curve. For any $s\geq 4$, it follows from Observation \ref{ob1} that  the collection $\{ y_k\}_{k\in \mathbb{Z}}$ have $s$-comparable geometric intersection numbers, and consequently, $\{ T^s_{y_k}\}_{k\in \mathbb{Z}}$ freely generate a subgroup isomorphic to $F_\infty$.

\end{example}

\subsection{Proof of Theorem \ref{thmC}}

\begin{proof}
Starting with three essential simple closed curves and $s\in \mathbb{N}$, we can apply the Euclidean algorithm to get at most three simple closed curves with $s$-comparable geometric intersection numbers. 

In cases we have three simple closed curves and they also have $s$-proportional geometric intersection numbers, we know that the subgroup generated is $F_3$. By the cases we considered above, we saw the only other possibility is $F_2\times C_2$ (only possible if $s=2$).

In case the Euclidean algorithm ends with just two simple closed curves, then by Fact \ref{2dt}
 the subgroup can either be $F_2$, or the entire Mod$(\mathbb{T}^2)$ (only possible if $s=1$). And if the algorithm ends with just one simple closed curve the subgroup is $F_1$.

Examples \ref{eg1}, \ref{eg2}, \ref{eg3}, \ref{eg4}, and Claim \ref{csquares} show all the possibilities in the statement of Theorem \ref{thmC} do occur.

\end{proof}

\section {Small powers and geometric intersection numbers}

We know that if a subgroup $G$ of Mod($\mathbb{T}^2)$ contains two Dehn twists $T_x$ and $T_y$ with $(x,y)=1$, then $G$ is all of Mod($\mathbb{T}^2)$. In a similar vein,  in this section we will characterize subgroups of Mod($\mathbb{T}^2)$ generated by $s$-th powers of Dehn twists, if it contains $T_x^s$ and $T_y^s$ with $s(x,y)=2$.

\subsection{Geometric Intersection number is 2 and power is 1}

Let us now consider two curves $x,y$ in the torus with geometric intersection number $2$.
By change of coordinates we may assume that the homology classes of the curves are: $\vec x= \spalignmat{1; 0},  \vec y= \spalignmat{1; 2}$. Let $G$ be the subgroup generated by $T_x$ and $T_y$.

\begin{claim} \label{clA}
With notation as above, 
$$ H^1_G=\left\{\spalignmat{a; b}\in \mathbb{Z}^2| \spalignmat{a; b} \text{ is primitive, $b$ is even}\right\}.$$ Moreover, if $z$ is any simple closed curve so that $\vec z\notin H^1_G$, and we let $K$ be the group generated by $T_x, T_y$ and $T_z$, then $K$ is all of Mod($\mathbb{T}^2$); or equivalently  $H^1_K$ is the collection of primitive vectors in $\mathbb{Z}^2.$

\end{claim}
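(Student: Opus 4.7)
The plan is to prove both halves of the claim by combining a mod-$2$ reduction with an inductive description of the $G$-orbits on primitive vectors in $\mathbb{Z}^2$.

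For the description of $H^1_G$, I would first exploit the reduction $SL(2,\mathbb{Z})\to SL(2,\mathbb{F}_2)\cong S_3$. With $\vec x=\spalignmat{1;0}$ and $\vec y=\spalignmat{1;2}$, both $T_x$ and $T_y$ reduce modulo $2$ to $T=\begin{pmatrix}1&1\\0&1\end{pmatrix}$, so the image of $G$ is contained in $\{I,T\}$. Using the matrix formula $T_{(a,b)}=\begin{pmatrix}1-ab&a^2\\-b^2&1+ab\end{pmatrix}$, a check of the three nonzero mod-$2$ residues shows $T_{(a,b)}\in\{I,T\}\pmod 2$ iff $b$ is even, giving $H^1_G\subseteq\{(a,b)\text{ primitive}:b\text{ even}\}$. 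For the reverse inclusion, it suffices to show every such $(a,b)$ lies in the $G$-orbit of $\vec x$, since then $T_{(a,b)}=g T_x g^{-1}\in G$ for the appropriate $g\in G$. I would induct on $|b|$: the base case $b=0$ gives $\pm\vec x$; for the inductive step, apply $T_x^k$ to normalize $0\le a<b$, observe that primitivity forces $a$ odd and that the boundary case $a=b/2$ can only occur for $(1,2)=\vec y$, and then verify that $T_y^{-1}$ sends the second coordinate to $4a-b$ (with $|4a-b|<b$) when $a<b/2$, while $T_y$ sends it to $3b-4a$ (with $|3b-4a|<b$) when $a>b/2$.

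For the second statement, write $\vec z=(p,q)$ primitive with $q$ odd (the negation of lying in $H^1_G$). Running the same inductive argument with base point $(0,1)$ in place of $\vec x$ shows that the $G$-orbit of $(0,1)$ is precisely the set of primitive vectors with odd second coordinate; in particular $\vec z$ lies in this orbit, so there is $g\in G\subseteq K$ with $g\vec z=(0,1)$, and hence $T_{(0,1)}=g T_z g^{-1}\in K$. Since $T_x\in K$ and $((1,0),(0,1))=1$, part (2) of Fact \ref{2dt} (torus list) yields $\langle T_x,T_{(0,1)}\rangle=SL(2,\mathbb{Z})$, so $K=SL(2,\mathbb{Z})$; equivalently, $H^1_K$ is the full set of primitive vectors in $\mathbb{Z}^2$.

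The main obstacle in this plan is the inductive reduction step: one must carefully verify that after normalizing $a$ modulo $b$, exactly one of $T_y^{\pm 1}$ strictly decreases $|b|$, and that the only boundary case $a=b/2$ coincides with a vector already in the relevant orbit ($\vec y$ in Part 1, and never arising in Part 2 since $b$ is odd there). This is a short but genuinely case-by-case calculation; apart from this, everything else reduces to the mod-$2$ classification and the already established two-Dehn-twist Fact \ref{2dt}.
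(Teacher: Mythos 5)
Your argument is correct in substance, but two of its three components take a genuinely different route from the paper's. For the containment $H^1_G\subseteq\{(a,b):b\text{ even}\}$ you reduce modulo $2$: since $T_x$ and $T_y$ both map to $\left(\begin{smallmatrix}1&1\\0&1\end{smallmatrix}\right)$ in $SL(2,\mathbb{F}_2)$, the image of $G$ has order two, and your check that $T_{(a,b)}$ lands in that subgroup exactly when $b$ is even gives the inclusion directly. The paper instead obtains this inclusion indirectly: it first proves the ``moreover'' statement and then notes that if $H^1_G$ contained a vector with odd second coordinate, $G$ would be all of Mod($\mathbb{T}^2$), contradicting $G\cong F_2$ from Fact \ref{2dt}. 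Your mod-$2$ argument is more self-contained and pins down the obstruction explicitly. For the ``moreover'' part you rerun the orbit induction with base point $(0,1)$, whereas the paper uses Bezout: given $\vec z=(p,q)$ with $q$ odd, one of the two vectors $(a,b)$ and $(a+p,b+q)$ with $aq-bp=1$ has even second coordinate, hence lies in $H^1_G$ and meets $z$ once; this is shorter and avoids a second induction. The reverse inclusion $H^1_G\supseteq\{(a,b):b\text{ even}\}$ is essentially the paper's argument (normalize with a power of $T_x$, then apply $T_y^{\pm1}$ to strictly decrease $|b|$), and your case analysis of $4a-b$ versus $3b-4a$ checks out.

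One small correction: it is not true that every primitive $(a,b)$ with $b$ even lies in the $G$-orbit of $\vec x$. Modulo $4$ both generators are upper triangular, so every element of $G$ is upper triangular mod $4$ and the orbit of $(1,0)$ contains only vectors whose second coordinate is divisible by $4$; in particular $\vec y=(1,2)$ is not in that orbit. What your induction actually proves --- and all you need --- is that every such vector lies in the $G$-orbit of $\pm\vec x$ or of $\vec y$, which still yields $T_{(a,b)}=gT_ug^{-1}\in G$ with $u\in\{x,y\}$. The analogous caveat applies to the orbit of $(0,1)$ in the second part (you only get the set of primitive vectors with odd second coordinate up to sign), but since $T_{\vec v}=T_{-\vec v}$ this is harmless.
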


\begin{proof}

We begin by showing the that $ H^1_G\supseteq\left\{\spalignmat{a; b}\in \mathbb{Z}^2| \spalignmat{a; b} \text{ is primitive, $b$ is even}\right\}.$  We will use strong induction on half the absolute value $\frac{|j|}{2}$ of the second coordinate $j$.

Base case $j=0$: Note that the only primitive elements are $\vec x= \spalignmat{1; 0}$ and $-\vec x= \spalignmat{-1; 0}$ which are in $H^1_G$ by definition.\\
Base case $|j|=2$: For any $k\in \mathbb{Z}$, we have 
$T_x^k(\vec y)=\vec y+k\langle \vec x, \vec y\rangle\vec x= \spalignmat{0; 2}+ 2k\spalignmat{1; 0}=\spalignmat{2k+1; 2} $;\\
For any $k\in \mathbb{Z}$, we have 
$T_x^k(-\vec y)=-y+k\langle \vec x,-\vec y\rangle\vec x= \spalignmat{0; 2}+ 2k\spalignmat{1; 0}=-\spalignmat{2k+1; 2} .$ Thus we see all primitive elements with second coordinate $\pm 2$ is in $H^1_G$.\\
Suppose we assume the inductive hypothesis is true for $\frac{|j|}{2}\leq k$. 
Let us consider $\vec v=\spalignmat{a; k+2}$, note that by the division algorithm, we can choose $l\in \mathbb{Z}$ so that the first coordinate $b$ of $$\vec w:=T_x^{l} \vec v=\vec v+\langle x,v\rangle\vec x=\spalignmat{a; k+2}+(k+2)l\spalignmat{1; 0}= \spalignmat{a+(k+2)l; k+2}$$ is satisfies $0< b<k+2$ ( the case $b=0$ does not occur unless $k=\pm 1$, in which case it is a base case), or equivalently $-k-2<k+2-2b<k+2$.
Then we observe that 
$$T_y^{\pm 1} \vec w=\vec w\pm\langle y,w\rangle\vec y=\spalignmat{b; k+2}\pm(k+2-2b)\spalignmat{1; 2}$$

and by choosing $\pm$ sign appropriately, we can ensure that second coordinate is strictly smaller than 
$k+2-2(k+2) =-(k+2)$.

Now we have seen applying the (inverse) Dehn twist about $y$ to $\vec w$ makes the absolute value of second coordinate at most $|k|$, and thus by induction hypothesis we have $T_y^{-1}(T_x^l \vec v)\in H^1_G$, and consequently $\vec v\in H^1_G$.
 A similar calculation (or apply the above to $-\vec v$) holds if we start with $\vec v$ with second coordinate $-(k+2)$. Thus, the inductive step is shown, and this concludes the proof of the inclusion in one direction. 
 The proof of the other inclusion will be deferred until after we prove second half of the proposition.\\

Let us consider a third simple closed curve $z$,  with

$\vec z\notin \left\{\spalignmat{a; b}\in \mathbb{Z}^2| \spalignmat{a; b} \text{ is primitive, $b$ is even}\right\}$, and we will exhibit a simple closed curve $u$ so that $\vec u\in  \left\{\spalignmat{a; b}\in \mathbb{Z}^2| \spalignmat{a; b} \text{ is primitive, $b$ is even}\right\} $, and $(u,z)=1$, and by Fact \ref{2dt}, it will follow that the subgroup $K$ generated by $T_x, T_y$ and $T_z$ is Mod($\mathbb{T}^2$).

\noindent Suppose $\vec z=\spalignmat{p; q}$,  $q$ has to be odd since otherwise $\vec z\in \left\{\spalignmat{a; b}\in \mathbb{Z}^2| \spalignmat{a; b} \text{ is primitive, $b$ is even}\right\}$. 

By Bezout's lemma, there is a vector $\spalignmat{a; b}$ so that $aq-bp=1$. Also the vector $\spalignmat{a+p; b+q}$ has the same property, i.e. $(a+p)q-(b+q)p=1$. One of the primitive vectors $\spalignmat{a; b}$ or $\spalignmat{a+p; b+q}$ is in $H^1_G$ since $q$ odd. Thus, $K$ is all of Mod($\mathbb{T}^2$), and so $H^1_K$ consists of all primitive vectors in $\mathbb{Z}^2$.

Now, by Fact \ref{2dt} we know that $G$ is isomorphic to $F_2$, and thus we must have $$H_G^1= \left\{\spalignmat{a; b}\in \mathbb{Z}^2| \spalignmat{a; b} \text{ is primitive, $b$ is even}\right\},$$ otherwise $G$ would be Mod($\mathbb{T}^2$).

\end{proof}

Hence we conclude, given three curves on a torus with one pairwise geometric intersection number 2, the subgroup generated by them is either $F_2$ or the entire mapping class group Mod($\mathbb{T}^2)$. Moreover, we have:

\begin{prop}\label{gino2}
Given a subgroup $G$ of Mod($\mathbb{T}^2)$ generated by a collection of Dehn twists, if $G$ contains two Dehn twists $T_x$ and $T_y$ where the geometric intersection number of $x$ and $y$ is 2, then $G$ is either isomorphic to the free group $F_2$ or the entire Mod($\mathbb{T}^2)$.
\end{prop}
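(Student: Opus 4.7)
The plan is to reduce to Claim \ref{clA} via a dichotomy on the generating Dehn twists of $G$. First, I would perform a change of coordinates (that is, conjugate by a suitable element of Mod$(\mathbb{T}^2)$) so that $\vec{x} = \spalignmat{1;0}$ and $\vec{y} = \spalignmat{1;2}$, as in the setup preceding Claim \ref{clA}. This is harmless since it replaces $G$ by an isomorphic subgroup. Let $H := \langle T_x, T_y\rangle$; by Fact \ref{2dt}, $H \cong F_2$, and by Claim \ref{clA} we have the explicit description $H^1_H = \{\spalignmat{a;b} \in \mathbb{Z}^2 : \spalignmat{a;b} \text{ is primitive and } b \text{ is even}\}$.

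Now let $\mathcal{T} = \{T_{z_i}\}_{i\in I}$ be a collection of Dehn twists generating $G$. Split into two cases according to whether every $\vec{z_i}$ lies in $H^1_H$. In the first case, every generator $T_{z_i}$ already belongs to $H$, so $G \subseteq H$; combined with $T_x, T_y \in G$, this gives $G = H \cong F_2$. In the second case, there is some generator $T_z \in \mathcal{T}$ with $\vec{z} \notin H^1_H$; then by the second half of Claim \ref{clA}, $\langle T_x, T_y, T_z\rangle = $ Mod$(\mathbb{T}^2)$, and since this is contained in $G$, we obtain $G = $ Mod$(\mathbb{T}^2)$.

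The only thing that needs a sanity check is that the claim indeed applies in the second case without any extra hypothesis on $z$ beyond $\vec{z} \notin H^1_H$, which is exactly what Claim \ref{clA} provides. There is no real obstacle here; the work was already done in Claim \ref{clA}, and the proposition is essentially a packaging statement that promotes the three-curve conclusion of that claim to arbitrarily many generators by observing that either all extra generators are redundant (they lie in $H$) or a single extra generator already forces the subgroup to be the full mapping class group.
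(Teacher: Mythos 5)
Your proof is correct and follows essentially the same route as the paper: after the same change of coordinates, the paper also invokes the description of $H^1_{\langle T_x,T_y\rangle}$ from Claim \ref{clA} and handles the remaining generators one at a time via exactly the dichotomy you describe (either a generator's class lies in $H^1$ and the generator is redundant, or a single such generator forces the full mapping class group). Your phrasing as a clean two-case split is just a slightly more explicit packaging of the paper's ``add the generators of $G$ one at a time'' argument.
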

\begin{proof}
By change of coordinates, and picking the correct orientations, we may assume that $\vec x= \spalignmat{1; 0},  \vec y= \spalignmat{1; 2}$. Let us consider the subgroup $E$ generated by $T_x$ and $T_y$, and by the above discussion, we know exactly what  $H^1_E$ is. Moreover, we know given any other simple closed curve $z$, either $\vec z$ is already in $H^1_E$, or $z$ has geometric intersection number 1 with some element in $H^1_E$ (in which case the subgroup generated by $T_x$, $T_y$ and $T_z$ is Mod$(\mathbb{T}^2)$). So we can simply add the generators of $G$ one at a time, and the result follows.
\end{proof}

More specifically, if we already have  $\vec x= \spalignmat{1; 0},  \vec y= \spalignmat{1; 2}$, then the subgroup generated by $T_x,T_y,T_{z_1},T_{z_2},...$ is $F_2$ if each $\vec z_i$ has second coordinate even, and Mod$(\mathbb{T}^2)$ otherwise.

\subsection{Geometric Intersection number is 1 and power is 2}
The discussions of this Subsection will be similar to the previous one. Let us now consider two curves $x,y$ in the torus with geometric intersection number $1$.
By change of coordinates we may assume that the homology classes of the curves are: $\vec x= \spalignmat{1; 0},  \vec y= \spalignmat{0; 1}$. Let $G$ be the subgroup generated by $T_x^2$ and $T_y^2$. 

\begin{claim}\label{clB}
With notation as above, $$H^2_G=\left\{\spalignmat{a; b}\in \mathbb{Z}^2\mid \spalignmat{a; b} \text{ is primitive, $a+b$ is odd}\right\}.$$ Moreover, if $w$ is any simple closed curve so that $\vec w\notin H^2_G$, and we let $K$ be the group generated by $T_x^2, T_y^2$ and $T_w^2$, then $K$ is isomorphic to $F_2\times C_2$ and  $H^2_K$ is the collection of primitive vectors in $\mathbb{Z}^2.$

\end{claim}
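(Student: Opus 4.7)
My plan parallels the proof of Claim \ref{clA}, with the parity of $a+b$ replacing the parity of $b$. First I would compute
$$T_x^2\spalignmat{a;b}=\spalignmat{a+2b;b}, \qquad T_y^2\spalignmat{a;b}=\spalignmat{a;b-2a},$$
so both generators of $G$ preserve $a+b\pmod 2$. Since $\vec x$ and $\vec y$ have coordinate sum $1$, the $G$-orbit of either vector lies entirely in the set of primitive vectors with odd coordinate sum.

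For the inclusion $H^2_G\supseteq\{\vec v\text{ primitive}: a+b\text{ odd}\}$, I would show by strong induction on $\max(|a|,|b|)$ that every such $\vec v$ lies in the $G$-orbit of $\vec x$ or $\vec y$; once this is established, writing $\vec v = g\cdot\vec x$ (or $g\cdot\vec y$) with $g\in G$ yields $T_v^2 = gT_x^2g^{-1}\in G$ (resp.\ $gT_y^2g^{-1}\in G$). The base case $\max(|a|,|b|)\leq 1$ gives exactly $\pm\vec x,\pm\vec y$. In the inductive step, WLOG $|a|\geq |b|$; primitivity rules out $|a|=|b|\geq 2$, so $|a|>|b|$. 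Choose $k$ to minimise $|a+2kb|$; the division algorithm gives $|a+2kb|\leq |b|<|a|$, with equality $|a+2kb|=|b|$ forcing $b\mid a$, hence $|b|=1$, in which case $a$ is even (since $a+b$ is odd) and we may in fact take $|a+2kb|=0$. Either way $T_x^{2k}\vec v$ has strictly smaller measure, and the inductive hypothesis completes the reduction.

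For the reverse inclusion, I would argue by contradiction. Suppose $T_v^2\in G$ for some primitive $\vec v$ with $a+b$ even; primitivity then forces both $a$ and $b$ to be odd. An analogous Euclidean reduction (the same moves, but now the invariant is ``both coordinates odd'' and the base case is $\pm(1,\pm 1)$; equality $|a+2kb|=|b|$ in the division step again forces $|b|=1$, landing us in the base case) produces $g\in G$ with $g\cdot\vec v=\spalignmat{1;1}$. Then $T_{\spalignmat{1;1}}^2=gT_v^2g^{-1}\in G$, so $G$ contains $\langle T_x^2,T_y^2,T_{\spalignmat{1;1}}^2\rangle$, which by Claim \ref{csquares} is isomorphic to $F_2\times C_2$. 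This contradicts $G\cong F_2$ (Fact \ref{2dt}), so $H^2_G=\{\vec v\text{ primitive}: a+b\text{ odd}\}$.

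The ``moreover'' part now follows. Given $\vec w$ primitive with $a+b$ even (hence both coordinates odd), the same both-odd reduction yields $g\in G$ with $g\cdot\vec w=\spalignmat{1;1}$, so
$$K=\langle T_x^2,T_y^2,T_w^2\rangle=\langle T_x^2,T_y^2,T_{\spalignmat{1;1}}^2\rangle\cong F_2\times C_2$$
by Claim \ref{csquares}. Finally, $H^2_K$ contains $H^2_G$ (odd-sum primitives) together with the $G$-orbit of $\vec w$ (all primitives with both coordinates odd), and these together exhaust the primitive vectors in $\mathbb{Z}^2$. The main technical point is the pair of Euclidean reductions modulo $2$; the rest is assembled from Fact \ref{2dt} and Claim \ref{csquares}.
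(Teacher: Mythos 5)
Your proposal is correct and follows essentially the same route as the paper: strong induction on $\max(|a|,|b|)$ using the mod-$2$ Euclidean reduction for the inclusion $\supseteq$, reduction of any even-sum (hence both-coordinates-odd) primitive vector to $\spalignmat{1;1}$ for the ``moreover'' part via Claim \ref{csquares}, and the observation that $G\cong F_2$ is torsion-free (so cannot contain $F_2\times C_2$) for the reverse inclusion. Your handling of the boundary case $|a+2kb|=|b|$ in the division step is a minor variant of the paper's parity argument, but the substance is identical.
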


\begin{proof}

We begin by showing the that $ H^2_G\supseteq\left\{\spalignmat{a; b}\in \mathbb{Z}^2| \spalignmat{a; b} \text{ is primitive,  $a+b$ is odd}\right\}.$
We will use strong induction on $\max\{|a|,|b|\}$, where $a,b$ denote the coordinates of the primitive vector.\\
Base case $\max\{|a|,|b|\}=1$: the only possibilities are $a=\pm 1$ and $b=0$;
and $a=0$ and $b=\pm 1$. But these vectors are just $\pm \vec x$ and  $\pm \vec y$, which are in $H^2_G$ by definition.

Let us assume the inductive hypothesis is true for $\max\{|a|,|b|\}\leq n$.
Suppose $\spalignmat{c; d}$ is in $\left\{\spalignmat{a; b}\in \mathbb{Z}^2| \spalignmat{a; b} \text{ is primitive, $a+b$ is odd}\right\}$ with $\max\{|c|,|d|\}=n+1$.
We have $|c|\neq|d|$ , since otherwise $c+d$ is not odd.
Without loss of generality we may assume $|c|>|d|$. Note that 
$ T_x^{2k}\spalignmat{c; d}= \spalignmat{c+2kd; d}$.
By the division algorithm (we may assume the remainder $r$ when divided by $2d$ has the form $-d<r\leq d$; but we know that $r\neq d$ since $c$ and $d$ have opposite parity), we may choose $k\in \mathbb{Z}$ so that $|c+2kd|<|d|$. 
By induction hypothesis
we see that $\spalignmat{c+2kd; d}\in H^2_G$, and consequently it follows that $\spalignmat{c; d}\in H^2_G$. We have shown one inclusion, the other inclusion will follow once again from the proof of the second part of the statement.

Given any primitive vector $\spalignmat{a; b}$ in $\mathbb{Z}^2$, we can similarly use another form of Euclidean algorithm to apply powers of $T_x^2$ and $T_y^2$ repeatedly to decrease maximum of the absolute value of the coordinates, as far as possible. If $a+b$ is odd, we will end up with $\pm \vec x$ or $\pm \vec y$. If $a+b$ is even, we will end up with  $\spalignmat{\pm 1; \pm 1}$. By the discussion just before Claim \ref{csquares}, we may assume that we end up with $\vec z=\spalignmat{1; 1}$. 

For any simple closed curve $w$, if the sum of coordinates of $\vec w$ is even, then $\vec z\in H_K^2$ and consequently $H^2_{K}$ consists of all primitive vectors of $\mathbb{Z}^2$ (since starting with any primitive $\spalignmat{a; b}$ with $a+b$ even, we can always and up at $\vec z$, which means $\spalignmat{a; b}$ is contained in $H^2_{K}$). In particular it follows that the subgroup generated by all squares of Dehn twists is the same as subgroup generated by $T_x^2, T_y^2$ and $T_z^2$; which by Claim \ref{csquares} is isomorphic to $F_2\times C_2$.

Now, by Fact \ref{2dt} we know that $G$ is isomorphic to $F_2$, and thus we must have $$H_G^2= \left\{\spalignmat{a; b}\in \mathbb{Z}^2\mid \spalignmat{a; b} \text{ is primitive, $a+b$ is odd}\right\},$$ otherwise $G$ would be isomorphic to $F_2\times C_2$.
\end{proof}

\begin{remark} Let us note the following consequences of the above proof:
\begin{enumerate}
    \item The subgroup of Mod($\mathbb{T}^2)$ generated by all squares of Dehn twists is isomorphic to $F_2\times C_2$.
    \item If we start with the vectors $\vec x= \spalignmat{1; 0},  \vec y= \spalignmat{0; 1}$ and $\vec z= \spalignmat{1; 1}$, then we can obtain every primitive vector in $\mathbb{Z}^2$ by applying some word in $T_x^2, T_y^2$ and $T_z^2$ to exactly one of $\vec x,\vec y$ or $\vec z$. To see the uniqueness, note that any primitive vector in $\mathbb{Z}^2$ is congruent modulo 2 to exactly one of $\vec x,\vec y$ or $\vec z$, and applying any square of a Dehn twist to a primitive vector preserves the congruence class modulo 2.\end{enumerate}

\end{remark}

Similar to Proposition \ref{gino2}, we have (we skip the proof as it is essentially the same proof as that of Proposition \ref{gino2}):
\begin{prop}\label{gino1}
Given a subgroup $G$ of Mod($\mathbb{T}^2)$ generated by a collection of squares of Dehn twists, if $G$ contains $T_x^2$ and $T_y^2$ where the geometric intersection number of $x$ and $y$ is 1, then $G$ is either isomorphic to the free group $F_2$ or to $F_2\times C_2$.
\end{prop}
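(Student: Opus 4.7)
The plan is to follow the proof of Proposition \ref{gino2} almost verbatim, with Claim \ref{clB} playing the role of Claim \ref{clA}. After a change of coordinates and a choice of orientations, I would assume $\vec x = \spalignmat{1; 0}$ and $\vec y = \spalignmat{0; 1}$, and set $E := \langle T_x^2, T_y^2\rangle$. Fact \ref{2dt} gives $E \cong F_2$, while Claim \ref{clB} identifies $H^2_E$ as exactly the set of primitive vectors $(a,b)\in\mathbb{Z}^2$ with $a+b$ odd; moreover it asserts that for any simple closed curve $w$ with $\vec w \notin H^2_E$, appending $T_w^2$ enlarges $E$ to a subgroup isomorphic to $F_2 \times C_2$ whose associated homology set consists of every primitive vector in $\mathbb{Z}^2$.

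Next I would add the remaining squared Dehn twist generators of $G$ one at a time, so that at each stage the current subgroup $G_i$ contains $E$ and is generated by a finite collection of squares of Dehn twists. For a new generator $T_z^2$, there are two cases. If $\vec z \in H^2_{G_i}$, then $T_z^2 \in G_i$ already (using that $T_z^2$ is determined by $\pm\vec z$), and nothing changes. If $\vec z \notin H^2_{G_i}$, then in particular $\vec z \notin H^2_E$, so the sum of its coordinates is even; Claim \ref{clB} then yields $\langle T_x^2, T_y^2, T_z^2\rangle \cong F_2 \times C_2$, and hence $G_i$ is promoted to $F_2 \times C_2$.

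Once this ``jump'' has occurred, $H^2_{G_i}$ is the set of all primitive vectors, so every future generator $T_{z'}^2$ already lies in $G_i$ and the process terminates. Consequently $G$ is either $F_2$ (no jump ever occurs) or $F_2 \times C_2$ (at least one jump occurs), which is the claimed dichotomy. I do not foresee any genuine obstacle: the heavy lifting has been done in Claim \ref{clB}, which supplies both the exact description of $H^2_E$ and the identification of the unique possible enlargement. The only minor subtlety, exactly as in Proposition \ref{gino2}, is that the change of coordinates is fixed using only $x$ and $y$; but the classification depends only on the parity of the coordinate sum of each added $\vec z_i$, which is intrinsic once $\vec x$ and $\vec y$ are placed as above.
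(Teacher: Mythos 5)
Your proposal is correct and is exactly the argument the paper intends: the paper explicitly skips the proof of Proposition \ref{gino1}, stating it is essentially the same as that of Proposition \ref{gino2}, with Claim \ref{clB} supplying the description of $H^2_E$ and the unique enlargement to $F_2\times C_2$. Your one-generator-at-a-time bookkeeping, including the observation that once the ``jump'' occurs the homology set becomes all primitive vectors so the process stabilizes, matches the paper's approach.
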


More specifically, if we already have  $\vec x= \spalignmat{1; 0},  \vec y= \spalignmat{0; 1}$, then the subgroup generated by $T_x^2,T_y^2,T_{z_1}^2,T_{z_2}^2,...$ is $F_2$ if each $\vec z_i$ has sum of coordinates odd, and $F_2\times C_2$ otherwise.

We showed in the proof of Claim \ref{clB} the subgroup generated by squares of all Dehn twists is isomorphic to $F_2\times C_2$. We note the following consequence:

\begin{prop}The subgroup of Mod$(\mathbb{T}^2)$ generated by $\{T_{x_i}^{s_i}\}_{i\in I}$ is a free group as long as all the $s_i$'s are multiples of 4.

\end{prop}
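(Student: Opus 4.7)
The plan is to leverage the structural result (from the proof of Claim~\ref{clB} and the discussion following it) that the subgroup $H$ of Mod$(\mathbb{T}^2)$ generated by all squares of Dehn twists is isomorphic to $F_2 \times C_2$, with the $C_2$ factor generated by the hyperelliptic involution $\iota$.

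First, I would observe that since every $s_i$ is a (positive) multiple of $4$, we have $s_i = 2 m_i$ with $m_i$ even, so $T_{x_i}^{s_i} = (T_{x_i}^2)^{m_i}$. Each $T_{x_i}^2$ sits inside $H \cong F_2 \times C_2$, so write $T_{x_i}^2 = (f_i, \epsilon_i)$ with $f_i \in F_2$ and $\epsilon_i \in C_2$. Since $C_2$ has exponent $2$ and $m_i$ is even, $\epsilon_i^{m_i} = e$, and hence
\[
T_{x_i}^{s_i} \;=\; (T_{x_i}^2)^{m_i} \;=\; \bigl(f_i^{m_i},\, e\bigr) \;\in\; F_2 \times \{e\}.
\]
Equivalently, applying the projection $\pi: F_2 \times C_2 \to C_2$, each generator $T_{x_i}^{s_i}$ lies in $\ker(\pi) \cong F_2$.

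Therefore $\langle T_{x_i}^{s_i} \mid i \in I\rangle$ is a subgroup of the free group $F_2$, and by the Nielsen--Schreier theorem any subgroup of a free group is free, which is exactly what we wanted. There is no real obstacle here beyond invoking the $F_2 \times C_2$ description of the squares-of-all-Dehn-twists subgroup; the divisibility hypothesis "$4 \mid s_i$" is precisely the condition needed to kill the $C_2$ torsion factor, since $2 \mid s_i$ alone only guarantees membership in $F_2 \times C_2$ but not in its free direct factor.
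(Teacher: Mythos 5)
Your proof is correct, and it rests on the same key input as the paper's (the fact, from Claims \ref{csquares} and \ref{clB}, that the subgroup generated by all squares of Dehn twists is the internal direct product of $\langle T_x^2,T_y^2\rangle\cong F_2$ with $\langle\iota\rangle\cong C_2$), but the mechanism you use to place the generators inside the free factor is genuinely different and a bit cleaner. The paper argues concretely: it invokes the transitivity statement in the remark after Claim \ref{clB} to write each $T_{x_i}$ as $w_iT_{u_i}w_i^{-1}$ with $u_i\in\{x,y,z\}$ and $w_i$ a word in $T_x^2,T_y^2$, and then handles the case $u_i=z$ via the identity $T_z^4=(T_x^2T_y^2)^2$ — this is where the hypothesis $4\mid s_i$ enters. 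You instead only use the trivial observation that $T_{x_i}^2$ lies in $F_2\times C_2$, decompose it as $(f_i,\epsilon_i)$, and note that raising to the even power $m_i=s_i/2$ kills the $C_2$-component because $\iota$ is central of order two; this bypasses the transitivity of the word action on primitive vectors entirely. Both arguments land in the same subgroup $\langle T_x^2,T_y^2\rangle\cong F_2$ and finish with Nielsen--Schreier. What the paper's version buys is explicitness (one sees concretely which word in $T_x^2,T_y^2$ each $T_{x_i}^{s_i}$ equals); what yours buys is economy and the clear isolation of exactly why $4\mid s_i$ is the right hypothesis, namely that $2\mid s_i$ only reaches $F_2\times C_2$ while the extra factor of $2$ is needed to annihilate the torsion component.
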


\begin{proof} With $x,y$ and $z$ as in Claim \ref{csquares}, we note that
by the above remark any $T_{x_i}$ is of the form $w_iT_{u_i}w_i^{-1}$, where $u\in \{x,y,z\}$ and $w_i$ is a word in $T_x^2,T_y^2$ and $T_z^2$. Since $T_z^2=\iota T_x^{2} T_y^{2}$, we can assume $w_i$ is a word in $T_x^2$ and $T_y^2$. $w_iT_{u_i}^{s_i}w_i^{-1}$ is clearly a word in $T_x^2$ and $T_y^2$ if $u\in\{x,y\}$. However if $u=z$, while $T_{x_i}$ need not be a word in $T_x^2$ and $T_y^2$, $T_{x_i}^{s_i}$ is a word in $T_x^2$ and $T_y^2$ as long as $4$ divides $s_i$ since $T_z^4=(T_x^{2} T_y^{2})^2$. Thus the entire subgroup generated by $T_{x_1}^{s_1},...,T_{x_n}^{s_n}$ is contained in the subgroup generated by the $T_x^2$ and $T_y^2$. The result follows since we know that the latter subgroup is isomorphic to $F_2$, and any subgroup of a free group is free.
\end{proof}


\section{Ping pong with sliding}

We have seen that given any three powers of Dehn twists, we can apply the Euclidean algorithm and get down to at most three Dehn twist powers so that the corresponding curves have comparable geometric intersection number. Moreover, if we started with uniform powers different from two, we know by Theorem \ref{thmB} and Fact \ref{2dt} what subgroups the generate. In this section, we will describe a procedure by which we can hope to find what subgroup is generated by a collection of Dehn twists\footnote{The procedure may be modified to begin with any Dehn twist powers, although it is not quite clear what the right analogue of terminate would be.}. As we will see later in this section, we are able to figure out what some subgroups are by this procedure, which we were unable to do by directly using the results from previous section. However, it is not clear if this procedure always terminate, which is why we are not calling it an algorithm.

For this procedure we will use in addition to the ping pong lemma (more specifically Theorem \ref{thmA}), sliding moves for Dehn twist factorizations. Sliding moves were originally studied by Hurwitz \cite{Hu}, they come up naturally in studying the action of the braid group $B_n$ on the fundamental group of the $n$ times punctured disc. Typically sliding moves are studied for tuples of elements, we will apply them for a elements of the mapping class group written as a product of Dehn twists. Given a product of (left and right handed) Dehn twists $g_ng_{n-1}...g_2g_1$  
we can pick any element, say $g_i$ and slide it to the very right of the word at the expense of conjugating (by $g_i$) the elements on the left of the chosen $g_i$. To elaborate we can replace $g_{i}g_{i-1}$ with $(g_{i}g_{i-1}g_{i}^{-1})g_{i}$ in the word  (keeping everything else the same), and the element $w$ remains the same. We could also use the inverse sliding move $g_{i+1}g_i\rightarrow g_i(g_i^{-1}g_{i+1}g_i) $ to move the element $g_i$ in the to the left of the word.

\begin{remark}\label{sliding}

 Given any element $w$ in Mod($\mathbb{T}^2$), written as a product of Dehn twist powers $T_x^s$, $T_{y_1}^{s_1},...,T_{y_n}^{s_n}$, with a Dehn twist $T_x$ appearing a factorization of $w$, we can rewrite $w$ as a product of $T_x^{\eta s}$ (where $\eta\in \{0,1\}$), times a word in $T_x^{2s}$, $T_{y_1}^{s_1},...,T_{y_n}^{s_n}$, and $T_{z_1}^{s_1},...,T_{z_n}^{s_n}$ where $z_i=T_x^s(y_i)$.
    
    For example, consider the word $w=T_{y_1}^5T_x^{-3} T_{y_1}^{-2}T_x^{3}T_{y_2}T_x^{-1}T_{y_3}^4$ with all $s_i$ and $s$ equal to 1. Since we only want conjugates of $T_{y_i}$ by $T_x$ (recall $T_{f(x)}=f T_x f^{-1}$ for any homeomorphism $f$), we will rewrite the leftmost odd power of $T_x$ as $T_x$ times an even power of $T_x$ and slide the $T_x$ to the right till we come across another power of $T_x$, whence we repeat the process.
    $$w=T_{y_1}^5T_x^{-3} T_{y_1}^{-2}T_x^{3}T_{y_2}T_x^{-1}T_{y_3}^4= T_{y_1}^5T_x^{-4}T_x T_{y_1}^{-2}T_x^{3}T_{y_2}T_x^{-1} T_{y_3}^4$$
    $$= T_{y_1}^5T_x^{-4} T_{z_1}^{-2}T_xT_x^{3}T_{y_2}T_x^{-1} T_{y_3}^4=T_{y_1}^5T_x^{-4} T_{z_1}^{-2}T_x^{4}T_{y_2}T_x^{-1} T_{y_3}^4 $$
    $$= T_{y_1}^5T_x^{-4} T_{z_1}^{-2}T_x^{4}T_{y_2}T_x^{-2}T_xT_{y_3}^4=T_{y_1}^5T_x^{-4} T_{z_1}^{-2}T_x^{4}T_{y_2}T_x^{-2} T_{z_3}^4T_x.$$

\end{remark}

We now claim that if after applying one of these moves if a collection of Dehn twist powers generate a free subgroup, then the original collection has the same property.
\begin{prop}\label{slidefr}
Suppose we have a collection of distinct Dehn twist powers  $T_x^s$, $T_{y_1}^{s_1},...,T_{y_n}^{s_n}$, and we set $z_i=T_x^s(y_i)$ for all $i$. If the subgroup $\langle  T_x^{2s}, T_{y_1}^{s_1},...,T_{y_n}^{s_n},T_{z_1}^{s_1},...,T_{z_n}^{s_n}\rangle$ of Mod($\mathbb{T}^2$) is freely generated by $\{  T_x^{2s}, T_{y_1}^{s_1},...,T_{y_n}^{s_n},T_{z_1}^{s_1},...,T_{z_n}^{s_n}\}$, then the subgroup $\langle  T_x^{s}, T_{y_1}^{s_1},...,T_{y_n}^{s_n}\rangle$ of Mod($\mathbb{T}^2$) is freely generated by $\{T_x^{s},T_{y_1}^{s_1},...,T_{y_n}^{s_n}\}$. The same conclusion also holds if we instead set $z_i=T_x^{-s}(y_i)$ for all $i$.

\end{prop}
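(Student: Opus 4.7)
The plan is to interpret the hypothesis algebraically as injectivity of an evaluation map restricted to an index-two subgroup of an abstract free group, and then promote this to full injectivity via torsion-freeness of free groups. This algebraic reformulation captures exactly what Remark~\ref{sliding} accomplishes computationally.

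Let $F$ denote the abstract free group on symbols $X, Y_1, \ldots, Y_n$, and let $\phi \colon F \to \mathrm{Mod}(\mathbb{T}^2)$ be the evaluation homomorphism sending $X \mapsto T_x^s$ and $Y_i \mapsto T_{y_i}^{s_i}$. The proposition is equivalent to the assertion that $\phi$ is injective. I would introduce the parity homomorphism $\psi \colon F \to \mathbb{Z}/2\mathbb{Z}$ given by $\psi(X) = 1$, $\psi(Y_i) = 0$, and set $K := \ker \psi$. By the Nielsen--Schreier theorem applied to the Schreier transversal $\{1, X\}$, the index-two subgroup $K$ is freely generated by the set $\{X^2,\, Y_i,\, X Y_i X^{-1} : 1 \leq i \leq n\}$; Remark~\ref{sliding} is essentially the statement that every element of $F$ has a unique normal form as a transversal representative times such a Schreier word. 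Using the identity $T_{f(c)}^k = f T_c^k f^{-1}$, the restriction $\phi|_K$ sends this Schreier basis to exactly $\{T_x^{2s},\, T_{y_i}^{s_i},\, T_{z_i}^{s_i}\}$, the collection which by hypothesis freely generates its image in Mod$(\mathbb{T}^2)$; hence $\phi|_K$ is injective.

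To conclude, take any $w \in \ker \phi$. Then $\psi(w^2) = 0$ in $\mathbb{Z}/2\mathbb{Z}$, so $w^2 \in K$, and $\phi(w^2) = 1$, so $w^2 \in \ker(\phi|_K) = \{1\}$. Thus $w^2 = 1$ in $F$, and since free groups are torsion-free, $w = 1$. This proves $\phi$ is injective, establishing the first part of the proposition.

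For the variant with $z_i = T_x^{-s}(y_i)$, the same argument goes through using the Schreier transversal $\{1, X^{-1}\}$, whose Schreier basis is $\{X^2,\, Y_i,\, X^{-1} Y_i X\}$; the $\phi$-images are again the required $\{T_x^{2s},\, T_{y_i}^{s_i},\, T_{z_i}^{s_i}\}$ with $z_i$ now interpreted as $T_x^{-s}(y_i)$. The main step to highlight is the promotion from injectivity of $\phi|_K$ on the index-two subgroup to injectivity of $\phi$ on all of $F$, which crucially uses torsion-freeness of the free group; the remaining steps are standard Nielsen--Schreier bookkeeping.
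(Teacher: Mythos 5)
Your argument is correct, and it is worth comparing with the paper's, because although both rest on the same underlying observation --- sliding the $T_x^{s}$'s rewrites any word in $T_x^{s}, T_{y_i}^{s_i}$ as a word in $T_x^{2s}, T_{y_i}^{s_i}, T_{z_i}^{s_i}$ times at most one leftover power of $T_x^{s}$ --- your packaging is genuinely different. The paper works by contradiction at the level of words: a nontrivial relation among the original generators is slid (Remark \ref{sliding}) into either a nontrivial relation among the new generators or an expression of $T_x^{-s}$ as a word in them, and this second case is dispatched rather tersely. You instead recognize the sliding as Schreier rewriting for the index-two kernel $K$ of the parity map $\psi$, so Nielsen--Schreier hands you both that $\{X^2,\,Y_i,\,XY_iX^{-1}\}$ is a free basis of $K$ and that a nontrivial element of $K$ has a nontrivial normal form in that basis --- a point the word-level argument essentially takes on faith. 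Your treatment of the odd-parity case via $w\in\ker\phi \Rightarrow w^2\in K\cap\ker\phi=\{1\} \Rightarrow w=1$ by torsion-freeness of $F$ is a clean structural replacement for the paper's ad hoc handling of "the latter case," and it is the genuinely new ingredient here. Two small points you use implicitly and could state: the hypothesis that the subgroup is \emph{freely} generated by the listed $2n+1$ elements forces those elements to be pairwise distinct, which is what lets you say $\phi|_K$ carries a free basis bijectively onto a free basis and is therefore an isomorphism onto its image; and the identification $\phi(XY_iX^{-1})=T_{z_i}^{s_i}$ rests on $T_{f(c)}=fT_cf^{-1}$, which you do cite. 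With those noted, the proof is complete.
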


\begin{proof}
Suppose we have a non trivial relation $w=1$ among the Dehn twist powers $\{T_x^{s},T_{y_1}^{s_1},...,T_{y_n}^{s_n}\}$. By sliding (as explained in Remark \ref{sliding}) and if necessary moving the rightmost $T_x^s$ to the other side of the equality, we see that either we have a non-trivial relation among $\{  T_x^{2s}, T_{y_1}^{s_1},...,T_{y_n}^{s_n},T_{z_1}^{s_1},...,T_{z_n}^{s_n}\}$, or $T_x^{-s}$ is a word in $\{  T_x^{2s}, T_{y_1}^{s_1},...,T_{y_n}^{s_n},T_{z_1}^{s_1},...,T_{z_n}^{s_n}\}$. The latter case also gives rise to a similar contradiction, since $T_{z_1}^{s_1}=T_x^sT_{y_1}^{s_1}T_x^{-s}$ (alternately we can use the fact that $T_x^{2s}=(T_x^s)$ is also a non-trivial word in  $\{  T_x^{2s}, T_{y_1}^{s_1},...,T_{y_n}^{s_n},T_{z_1}^{s_1},...,T_{z_n}^{s_n}\}$).

For justification of the last sentence in the proposition, observe that we could instead slide powers of $T_x^s$ to the left, and in this case the conjugate of $T_{y_i}$ we see in the new word is $T_x^{-s}(y_i)$. 
\end{proof}

We are now ready to state our procedure:\\
\noindent\textbf{Procedure}: Suppose we start with a collection of Dehn twists $T_{x_1}^{s_1},...,T_{x_n}^{s_n}$. If the collection of simple closed curves $x_1,...,x_n$ is not $\{s_i\}$-proportional, then we find a triple $x_i, x_j$ and $x_k$ that violates the $\{s_i\}$-proportional inequality, that is it satisfies $$(x_i,x_j)+(x_j,x_k)+(x_k,x_i)> s_j(x_i,x_j)(x_j,x_k).$$
We then replace $T_{x_j}^{s_j}$ with $T_{x_j}^{2s_j}$, keep the rest of the $T_{x_l}^{s_l}$ unchanged, and append the Dehn twist powers  $T_{T_{x_j}^{ s_j}(x_l)}^{s_l}$  for all $l\neq j$ (or append  $T_{T_{x_j}^{ -s_j}(x_l)}^{s_l}$ for all $l\neq j$) to our collection. We keep repeating the above step. If after a finite number of steps we get a collection which satisfies the $\{s_i\}$- proportional inequalities, then we can conclude the original collection of Dehn twist powers are free in those generators.

In case the original collection we started of with is a collection of Dehn twists, we will keep a separate list where we keep track of all Dehn twists that we come across in the above process. Let us say the procedure \emph{terminates} in case we get a pair of Dehn twists where the geometric intersection number of a pair of Dehn twists is at most two, or after some finite number of steps we get a collection which have  with  $\{s_i\}$- proportional geometric intersection numbers.\\

Recall that if geometric intersection number of two simple closed curves $x$ and $y$ is 1, then $T_x$ and $T_y$ generate Mod($\mathbb{T}^2)$ (see Fact \ref{2dt}) and if the geometric intersection number is 1, then the collection of Dehn twists can generate either $F_2$ or Mod($\mathbb{T}^2)$ (see Proposition \ref{gino2}). Thus if the procedure starting with a collection of Dehn twists terminates, then we can figure out precisely what subgroup they generate. This brings up the natural question:
\begin{question}
Starting with a collection of Dehn twists, does the procedure mentioned above always terminate? If the answer is "No", can one find an algorithm (or procedure) which tells us the subgroup generated by given a collection of Dehn twists in the torus?
\end{question}

Let us now apply the procedure to Example \ref{tricky} and figure out the subgroup generated.
\begin{example}\label{eg7} We start with the $\vec x= \spalignmat{1; 0},\vec y= \spalignmat{1; 3},\vec z= \spalignmat{1; 10},\vec w= \spalignmat{3; 17}$. 
We see that when we apply the procedure and replace $T_y$ with $T_y^2$, we need to append the vectors:
$$T_y^{-1}\vec x=\spalignmat{4;9}, T_y^{-1}\vec z=-\spalignmat{6;11}, \text{ and } T_y^{-1}\vec w=-\spalignmat{5;7}.$$
and it can be checked that they satisfy the $\{s_i\}$- proportional inequalities. Thus the subgroup is generated by Dehn twists about given collection of curves is isomorphic to $F_4$, freely generated by $T_x,T_y,T_z$ and $T_w$. 
\end{example}

Let us now discuss an example with higher powers of Dehn twists.
\begin{example}\label{eg8}
Consider the subgroup generated by $T_x^4, T_{y_0}^4, T_{y_1}^4, T_{y_2}^4,$ and $T_{y_3}^4$, where $\vec x=\spalignmat{1;0}$, and $\vec y_i=\spalignmat{i;1}$ for $0\leq i\leq 3$. This time we see that the given collection of vectors do not have 4-proportional geometric intersection numbers. The procedure above requires us to replace $T_x^4$ with its square $T_x^8$, and append four more vectors $T_{y_4}^4, T_{y_5}^4, T_{y_6}^4,$ and $T_{y_7}^4$ (using the notation $y_i=\spalignmat{i;1}$ with $i$ an arbitrary integer). We still see this new collection do not have proportional geometric intersection numbers, precisely because of $\vec x$ and the two extreme $\vec y_i$. This pattern continues no matter how many times we raise the power of $T_x^4$. While the above procedure fails, it turns out the subgroup generated by the original collection of fourth powers is $F_5$, freely generated by the aforementioned collection. We can justify this by sliding all powers of $T_x$, we will obtaining the above statement as a special case of Proposition \ref{free} below, after we discuss a more general setup.


\end{example}

We begin by stating an analogue of Remark \ref{sliding}, where we slide all powers of $T_x$ to one side.
\begin{remark}
Given any element $w$ in Mod($\mathbb{T}^2$), written as a product of Dehn twists $T_x$, $T_{y_1},...,T_{y_n}$, with a Dehn twist $T_x$ appearing a factorization of $w$, we can rewrite $w$ as a product of $T_x^\eta$ (for some integer $\eta$), times a word in  $T_{y_1},...,T_{y_n}$ and all possible conjugates of $T_{y_1},...,T_{y_n}$ by various powers of $T_x$.
   
    For example, let us consider the same word as above $w=T_{y_1}^5T_x^{-3}T_{y_1}^{-2}T_x^{3}T_{y_2}T_x^{-1}T_{y_3}^4$. This time we will bring all powers of $T_x$ to the right of $w$ by sliding.
     $$w=T_{y_1}^5T_x^{-3}T_{y_1}^{-2}T_x^{3}T_{y_2}T_x^{-1}T_{y_3}^4=
    T_{y_1}^5 T_{T_x^{-3}(y_1)}^{-2}T_x^{-3}T_x^{3}T_{y_2}T_x^{-1}T_{y_3}^4$$
    $$=T_{y_1}^5 T_{T_x^{-3}(y_1)}^{-2}T_{y_2}T_x^{-1}T_{y_3}^4=T_{y_1}^5 T_{T_x^{-3}(y_1)}^{-2}T_{y_2}T_{T_x^{-1}(y_3)}^4T_x^{-1}.$$
    
\end{remark}

By a very similar argument as in Proposition \ref{slidefr} we have: 

\begin{prop}\label{slidefull}
Suppose we have a collection of distinct Dehn twist powers  $T_x^s$, $T_{y_1}^{s_1},...,T_{y_n}^{s_n}$. If the subgroup $\langle   T_{T_x^{sp_1}(y_1)}^{s_1},...,T_{T_x^{sp_n}(y_n)}^{s_n}|p_i\in\mathbb{Z}\rangle$ of Mod($\mathbb{T}^2$) is freely generated by $\{ T_{T_x^{sp_1}(y_1)}^{s_1},...,T_{T_x^{sp_n}(y_n)}^{s_n}|p_i\in\mathbb{Z}\}$, then the subgroup $\langle  T_x^{s}, T_{y_1}^{s_1},...,T_{y_n}^{s_n}\rangle$ of Mod($\mathbb{T}^2$) is freely generated by $\{T_x^{s},T_{y_1}^{s_1},...,T_{y_n}^{s_n}\}$.
\end{prop}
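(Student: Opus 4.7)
The plan is to mirror the proof of Proposition \ref{slidefr}, but using the stronger sliding move from the preceding remark: that move pushes every occurrence of $T_x^{\pm s}$ all the way to the right of a word at the cost of replacing each intervening $T_{y_i}^{s_i}$ by an appropriate conjugate $T_{T_x^{sp}(y_i)}^{s_i}$. Suppose, toward contradiction, that there is a non-trivial reduced word $w$ in the alphabet $\{T_x^{s},T_{y_1}^{s_1},\ldots,T_{y_n}^{s_n}\}$ that represents the identity in Mod$(\mathbb{T}^2)$. Iterated sliding rewrites $w$ (at the level of words) as
\[
w \;=\; u \cdot T_x^{s\eta},
\]
where $\eta \in \mathbb{Z}$ is the total signed exponent of $T_x^{\pm s}$ appearing in $w$ and $u$ is a word in the conjugate collection $\{T_{T_x^{sp_i}(y_i)}^{s_i} \mid p_i \in \mathbb{Z}\}$ whose length equals the number of $T_{y_i}^{\pm s_i}$ letters in $w$.

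I would then split into two cases. If $\eta = 0$, then $u$ equals the identity in Mod$(\mathbb{T}^2)$, and $u$ is non-empty (else $w$ is itself empty, contradicting non-triviality). This is at once a non-trivial relation among the conjugate collection, contradicting the freeness hypothesis.

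If $\eta \neq 0$, then $u^{-1} = T_x^{s\eta}$ as group elements; since $T_x$ has infinite order, $u$ represents a non-identity element and hence is a non-empty word. Choosing some $y_i$ that is distinct from $x$ as a simple closed curve (such an $i$ must exist, for otherwise every conjugate $T_{T_x^{sp_j}(y_j)}^{s_j}$ is a power of $T_x$ and the conjugate collection cannot freely generate a subgroup containing more than one generator) and relabeling it as $y_1$, we use the standard identity $T_x^{s\eta}\, T_{y_1}^{s_1}\, T_x^{-s\eta} = T_{T_x^{s\eta}(y_1)}^{s_1}$ to obtain
\[
u^{-1}\, T_{y_1}^{s_1}\, u \;=\; T_{T_x^{s\eta}(y_1)}^{s_1}.
\]
Both sides lie in the subgroup generated by the conjugate collection. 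Because $(x,y_1) > 0$ and $\eta \neq 0$, we have $T_x^{s\eta}(y_1) \neq y_1$, so the right-hand side is a generator distinct from $T_{y_1}^{s_1}$. In a free group, no conjugate of a given basis element can equal a different basis element, so this is a non-trivial relation in the conjugate collection, contradicting the freeness hypothesis.

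The main subtlety lies in the second case: one must convert the statement "$T_x^{s\eta}$ is a word in the conjugates" into an honest non-trivial relation among those conjugates, since $T_x^{s\eta}$ is not itself one of the conjugate generators. The displayed identity accomplishes precisely this, reducing the contradiction to the elementary fact that conjugation in a free group cannot send one basis element to a different one; all other steps parallel the argument for Proposition \ref{slidefr}.
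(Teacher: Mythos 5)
Your proof is correct and follows the same route the paper intends: the paper justifies Proposition \ref{slidefull} only by saying it is ``a very similar argument as in Proposition \ref{slidefr}'', namely slide every power of $T_x^{s}$ to one side and split on whether the residual power $T_x^{s\eta}$ is trivial. Your handling of the case $\eta\neq 0$ via the identity $T_x^{s\eta}T_{y_1}^{s_1}T_x^{-s\eta}=T_{T_x^{s\eta}(y_1)}^{s_1}$, together with the fact that distinct basis elements of a free group are never conjugate, is exactly the mechanism the paper uses at the corresponding step of Proposition \ref{slidefr}.
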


 \begin{prop}\label{free} Let $s\geq 4$, consider the simple closed curves corresponding to $\vec x=\spalignmat{1; 0}$, and for $0\leq k \leq s-1$ the vectors $\vec y_k=\spalignmat{k; 1}$.
 Then the collection of $s$-th powers of Dehn twists along the aforementioned simple closed curves freely generates a subgroup of Mod$(\mathbb{T}^2)$. The same conclusion would hold if we let $k$ vary over any collection of representatives of congruence classes modulo $s$.
 \end{prop}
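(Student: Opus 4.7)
The plan is to invoke Proposition \ref{slidefull} to reduce the statement to Example \ref{eg}, which already establishes that $\{T^s_{y_m}\}_{m \in \mathbb{Z}}$ freely generates a subgroup of Mod$(\mathbb{T}^2)$ isomorphic to $F_\infty$ whenever $s \geq 4$. In other words, after sliding all powers of $T_x^s$ to one side, everything reduces to a situation already handled earlier in the paper.

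First I would compute the relevant conjugates. Since $\vec x = \spalignmat{1; 0}$ and $\vec y_k = \spalignmat{k; 1}$, we have $\langle x, y_k \rangle = 1$, so
$$T_x^{sp}(\vec y_k) \;=\; \vec y_k + sp \langle x, y_k \rangle \vec x \;=\; \spalignmat{k + sp; 1}.$$
As $k$ runs over $\{0, 1, \ldots, s-1\}$ and $p$ over $\mathbb{Z}$, the integer $k + sp$ hits every element of $\mathbb{Z}$ exactly once. Consequently
$$\bigl\{\, T^s_{T_x^{sp}(y_k)} \;:\; p \in \mathbb{Z},\ 0 \leq k \leq s-1 \,\bigr\} \;=\; \{\, T^s_{y_m} \;:\; m \in \mathbb{Z} \,\}.$$
By Example \ref{eg} (which in turn relies on Observation \ref{ob1} and Theorem \ref{thmB}), for $s \geq 4$ the collection on the right is a free generating set. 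Applying Proposition \ref{slidefull} with all $s_i = s$ then yields that $T_x^s, T_{y_0}^s, \ldots, T_{y_{s-1}}^s$ freely generate the desired subgroup of Mod$(\mathbb{T}^2)$.

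For the generalization where $k$ varies over an arbitrary collection $k_0, \ldots, k_{s-1}$ of representatives of the congruence classes modulo $s$, the same argument goes through verbatim, because the set $\{k_j + sp : 0 \leq j \leq s-1,\ p \in \mathbb{Z}\}$ is still all of $\mathbb{Z}$, and so the reduction to Example \ref{eg} is identical. The only potential subtlety, rather than an obstacle, is the bookkeeping needed to confirm that the hypothesis of Proposition \ref{slidefull} is matched correctly under these identifications; once that is verified, no further computation is required.
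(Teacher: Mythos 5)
Your proposal is correct and follows essentially the same route as the paper: both reduce the statement via Proposition \ref{slidefull} to the fact (Example \ref{eg}) that $\{T^s_{y_m}\}_{m\in\mathbb{Z}}$ is a free basis for $s\geq 4$, using the observation that the $T_x^{sp}$-conjugates of the $y_k$ sweep out exactly the full family $\{y_m\}_{m\in\mathbb{Z}}$. Your version just makes explicit the conjugation computation that the paper leaves to "the discussion just prior to the proposition."
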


\begin{proof}
By the discussion just prior to the proposition, we see that we can replace the given collection with $T^s_{y_l}$ for all $l\in\mathbb{Z}$ (observe that this happens as long as we start with any any collection of representatives of congruence classes modulo $s$). By Example \ref{eg}, we know that $\{ T^s_{y_l}\}_{l\in \mathbb{Z}}$ freely generate a subgroup, and thus the result follows by Proposition \ref{slidefull}.
\end{proof}

The above discussion suggests that in some situations, one can modify the procedure by sliding in different ways and figure out what subgroup is generated by a collection of Dehn twist powers.


\section{Subgroups generated by uniform powers of all Dehn twists in the torus}

In this section we discuss the structure of $N_s$, subgroup of generated by $s$-th powers of all Dehn twists in the torus. We already know  for $s=1$ the answer is Mod($\mathbb{T}^2)$ and for $s=2$ the answer is $F_2\times C_2$. We observe that $N_s$ is a normal subgroup of Mod($\mathbb{T}^2)$, as conjugates of Dehn twists are Dehn twists. Also since $$\spalignmat{1 1; 0 1}^s=\spalignmat{1 s; 0 1}\equiv\spalignmat{1 0; 0 1}\pmod{s}$$ $N_s$ is a subgroup of the principal congruence subgroup $\Gamma_s$ of level $s$ (the set of matrices in $SL_2(\mathbb{Z})$ which are congruent to the identity modulo $s),$

For $s\geq 3$, the subgroups $\Gamma_s$ are known to be free by any of the following methods:
\begin{itemize}
    \item It can be checked that for $s\geq 3$, $\Gamma_s$ is torsion free and maps isomorphically\footnote{this is equivalent to stating
    that we cannot have a matrix $A$ so that both $A$ and $-A$ are in $\Gamma_s$ for $s>2$.} to its image in $PSL_2(\mathbb{Z})$.
    Kurosh subgroup theorem for $PSL_2(\mathbb{Z})\cong \mathbb{Z}_2*\mathbb{Z}_3$ implies any subgroup is isomorphic to a free product of $\mathbb{Z}_2$ and $\mathbb{Z}_3$; and such groups 
    are free if and only if they are torsion free.
    \item Group action on trees \cite[Chapter 2]{OH}.
    \item Hyperbolic geometry and covering space theory, see for instance the second and third paragraphs of the second proof of Theorem \ref{thmE}.
\end{itemize}
Since subgroups of free groups are free, it follows that $N_s$ are all free for $s\geq 3$. We can say more about the structure of $N_s$.

\begin{thm} \label{thmE}

 $N_s$ is a subgroup of Mod($\mathbb{T}^2)$ of index $1,6,24,48,120$ and infinite, isomorphic to  $SL_2(\mathbb{Z})$, $F_2\times C_2$, $F_3$, $F_5$, $F_7$ and $F_\infty$ for $s$ equal to $2,3,4,5$ and bigger than 5 respectively. In particular, $N_s$ coincides with the principal congruence subgroup $\Gamma_s$ for $s\leq 5$, and is an infinite index subgroup of $\Gamma_s$ for $s>5$.
 Moreover for $2\leq s \leq 5$, the following primitive vectors correspond to the simple closed curves, so that $s$-th powers of Dehn twists about all but one of them freely generate  $F_s$ for $s=3,4,5$ respectively:
 \begin{itemize}
     \item $ \spalignmat{1; 0}$,  $ \spalignmat{0; 1}$,  $\spalignmat{1; 1}$, $\spalignmat{1; -1}$.
     \item $ \spalignmat{1; 0}$,  $ \spalignmat{0; 1}$,  $\spalignmat{1; 1}$, $\spalignmat{1; -1}$, $\spalignmat{1; 2}$, $\spalignmat{2; 1}$.
     \item $ \spalignmat{1; 0}$,  $ \spalignmat{0; 1}$,  $\spalignmat{1; 1}$, $\spalignmat{1; -1}$, $\spalignmat{1; 2}$, $\spalignmat{2; 1}$, $\spalignmat{1; -2}$, $\spalignmat{-2; 1}$, $\spalignmat{2; 3}$, $\spalignmat{-2; 3}$, $\spalignmat{2; 5}$, $\spalignmat{5; 2}$.
 \end{itemize}
\end{thm}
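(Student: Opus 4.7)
The plan is to first establish $N_s \subseteq \Gamma_s$ by direct matrix computation: $T_x^s$ acts on $H_1(\mathbb{T}^2)$ by $\vec v \mapsto \vec v + s\langle x, v\rangle \vec x$, which is congruent to the identity modulo $s$, so every generator of $N_s$ lies in $\Gamma_s$. Combined with the bulleted discussion preceding the theorem, $\Gamma_s$ (and hence its subgroup $N_s$) is torsion-free and free for $s \geq 3$. The rank is extracted from the orbifold Euler characteristic of $PSL_2(\mathbb{Z}) \cong \mathbb{Z}_2 * \mathbb{Z}_3$: for a torsion-free subgroup of index $n$ in $PSL_2(\mathbb{Z})$, Nielsen--Schreier gives rank $1 + n/6$. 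Applying the classical formula $[SL_2(\mathbb{Z}) : \Gamma_s] = s^3 \prod_{p\mid s}(1 - p^{-2})$ yields indices $6, 24, 48, 120$ for $s = 2, 3, 4, 5$, and since $-I \notin \Gamma_s$ for $s \geq 3$, the ranks come out to $1 + [SL_2(\mathbb{Z}) : \Gamma_s]/12 = 3, 5, 11$ for $s = 3, 4, 5$. The case $s = 1$ is trivial, and $s = 2$ is covered by the remarks following Claim \ref{clB}.

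For $s = 3, 4, 5$, the next step is to verify that the listed Dehn twist powers already generate $\Gamma_s$. The listed primitive vectors form a complete set of representatives for the $\pm$-equivalence classes of primitive vectors in $(\mathbb{Z}/s)^2$: the count of primitive vectors in $(\mathbb{Z}/s)^2$ is $s^2 \prod_{p \mid s}(1 - p^{-2})$, giving $8, 12, 24$, which halves to $4, 6, 12$ upon identifying $\pm$, matching the list sizes. The key technical step is an inductive reduction lemma in the spirit of the proofs of Claim \ref{clA} and Claim \ref{clB}: for any primitive $\vec v \in \mathbb{Z}^2$ with $\vec v \equiv \pm \vec v_i \pmod s$, I will exhibit a power of some listed $T^s_{v_j}$ that strictly decreases $\max\{|a|,|b|\}$, so that iterating brings $\vec v$ to $\pm \vec v_i$. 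This shows each $T^s_w$ is a conjugate of some $T^s_{v_i}$ by an element of the subgroup $H$ generated by the list, so $N_s = H$. Index matching (checking that the orbits of $H$ acting by conjugation on the Dehn twists produce exactly $[SL_2(\mathbb{Z}):\Gamma_s]$ cosets) then gives $N_s = \Gamma_s$. Finally, to extract a free basis of size $r$ from the $(r+1)$-element spanning set, I will invoke the Hopfian property of finite-rank free groups: any surjective endomorphism of $F_r$ is automatically injective, so a generating set of size $r$ is automatically free. It remains to check that after deleting one specific listed vector, the remaining $r$ generators still span; this is done by rerunning the reduction argument with the shortened list.

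For $s \geq 6$, Example \ref{eg} produces the freely independent infinite family $\{T^s_{y_k}\}_{k \in \mathbb{Z}}$ with $\vec y_k = \spalignmat{k;1}$ inside $N_s$, so $F_\infty \hookrightarrow N_s$. Since $N_s$ is torsion-free and hence free by the same Kurosh argument as above, containing an infinite-rank free subgroup forces $N_s \cong F_\infty$. Schreier's index-rank formula then implies that an infinite-rank subgroup of the finite-rank free group $\Gamma_s$ must have infinite index, so $N_s$ has infinite index in $\Gamma_s$ and therefore in $SL_2(\mathbb{Z})$. The main obstacle will be the inductive reduction step for $s = 3, 4, 5$: the case analysis scales with the number of $\pm$-classes (up to $12$ when $s=5$), and careful bookkeeping is required to ensure that at each reduction step a listed generator in the correct mod-$s$ class is available to apply, and that after deleting the designated vector the reduction still terminates.
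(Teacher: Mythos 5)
Your route is genuinely different from both proofs in the paper (which either cite Humphries--Newman--Kulkarni for the index of $N_s$ and verify generators by computer, or run a covering-space argument on the quotient of the Farey complex), and is closest in spirit to the remark following the theorem; but it contains one step that is actually false, in the $s\geq 6$ case. You argue: $N_s$ contains the infinite-rank free subgroup $\langle T^s_{y_k}\rangle_{k\in\mathbb{Z}}$ from Example \ref{eg}, $N_s$ is free, hence ``containing an infinite-rank free subgroup forces $N_s\cong F_\infty$,'' and only then do you deduce infinite index via Nielsen--Schreier. The quoted implication is wrong: a free group of finite rank can contain free subgroups of infinite rank (the commutator subgroup of $F_2$ is already $F_\infty$), so exhibiting $F_\infty\hookrightarrow N_s$ tells you nothing about the rank or index of $N_s$. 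The logical order must be reversed: you first need infinite index of $N_s$ in $\Gamma_s$ by some independent argument --- e.g.\ the paper's genus computation ($\mathbb{H}^2/N_s$ is a punctured sphere while $\mathbb{H}^2/\Gamma_s$ has positive genus for $s\geq 6$), or the observation that $PSL_2(\mathbb{Z})/N_s$ is the infinite $(2,3,s)$ triangle group for $s\geq 6$, or a citation --- and only then does normality of $N_s$ plus the Schreier/Karrass--Solitar theorem give that $N_s$ is not finitely generated, hence $F_\infty$.

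For $s\leq 5$ your outline is workable but the load-bearing step is deferred: the inductive reduction lemma for the lists at $s=3,4,5$ is exactly the content that the paper either outsources to the literature or replaces by the tetrahedron/octahedron/icosahedron identification of $F/\Gamma_s$, and it is nontrivial (for $s=5$ in particular, reducing $\max\{|a|,|b|\}$ may require twists from several congruence classes, and you have not verified a twist in the right class is always available). Two of your auxiliary steps also need tightening, though both are fixable. The ``index matching'' step can be replaced by a clean argument: $N_s$ is normal in $\Gamma_s\cong F_r$ and generated by $r+1$ elements, so by Karrass--Solitar it has some finite index $k$, whence its rank is $1+k(r-1)\leq r+1$, forcing $k=1$. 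And for extracting a free basis of size $r$, rerunning the reduction with a shortened list is dubious; the reliable route is the paper's, namely exhibiting the deleted generator explicitly as a word in the others (e.g.\ $T_y^3T_z^3T_x^3=T_w^{-3}$ for $s=3$), after which the Hopfian property does the rest. Finally, your rank count gives $F_{11}$ for $s=5$, which agrees with Theorem \ref{thmD} and the icosahedron count; the ``$F_7$'' in the statement of Theorem \ref{thmE} is a typo you are right to override.
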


This result above is mostly available in the literature, and we will reference the relevant results in our first proof. We will give another proof of this result using covering space theory, which is probably well known, but to the best of the authors knowledge does not appear in the literature.

 \begin{proof}  That the indices of the subgroup $N_s$ are as in the theorem is stated in \cite{H2} (see paragraph after Theorem 4 there), based on results in  \cite{N}. It is easy to see that for $s\leq 5$, these are also the indices of $\Gamma_s$ in Mod($\mathbb{T}^2)=SL_2(\mathbb{Z})$, or equivalently the cardinality of $SL_2(\mathbb{Z}_s)$.  Since we know that for $s>5$, the subgroups $N_s$ are infinite index in $\Gamma_s$ (as $\Gamma_s$ has finite index in $SL_2(\mathbb{Z}_s)$), by a theorem of Schreier \cite{Sc} (see also \cite{KS}) $N_s$ cannot be finitely generated since it is a normal subgroup.
  By work of Kulkarni\cite{Ku}, one can find free generators of $\Gamma_s$, and this has been implemented \cite{KL} in computer algebra systems\footnote{ For  instance, in SageMath one can find the free generators of $\Gamma_s$ using the command "Gamma($s$).generators  ()" for a specific value of $s$.}.  By direct computations it can be verified that the Dehn twists about the curves stated in the theorem generate $\Gamma_s$ freely for $3\leq s \leq 5$.

 \end{proof}

\noindent Most of the following proof was outlined to us by Dan Margalit.

\begin{proof} [Proof of Theorem \ref{thmE} using covering space theory]
By covering space theory, we know that for any subgroup of $G$ of $PSL_2(\mathbb{Z})$ (which clearly is a discrete subgroup of Isom$^+(\mathbb{H}^2)\cong PSL_2(\mathbb{R})$), the space $\mathbb{H}^2/G$ has fundamental group $G$. We would like to understand the quotient when $G$ is $\Gamma_s$ and $N_s$.
For doing this it is easier to think of the quotient of Farey complex (see \cite[Chapter 2]{OH}), which is an ideal triangulation of $\mathbb{H}^2$. Once we figure out the quotient of the Farey complex by $G$, we can simply remove the ideal vertices, and obtain the quotient of the $\mathbb{H}^2$ by $G$. Recall that the vertices of the Farey complex $F$ are the projectivised\footnote{This means we identify each primitive vector $\vec v$ with its negative $-\vec v$.} primitive vectors in $\mathbb{Z}^2$, there is an edge between two vertices $\pm \vec v$ and $\pm \vec w$ if and only if their integer span is all of $\mathbb{Z}^2$ (or, equivalently the determinant of the matrix with columns $\pm \vec v$ and $\pm \vec w$ 
is $\pm 1$), and all triangles are filled in.

The quotient of the Farey complex by $\Gamma_s$ is a triangulated complex and has the following similar description (essentially this boils down to taking the congruence classes) as follows:
vertices are  projectivised primitive\footnote{Here primitive means that the vector is not a multiple of another vector by a non-unit, or equivalently the ideal generated by the two coordinates is the unit ideal. If $s$ is a prime, then any non-zero vector in $\mathbb{Z}_s^2$ is primitive.} vectors in $\mathbb{Z}_s^2$, there is an edge between two vertices if and only if their integer span is all of $\mathbb{Z}_s^2$, and all triangles are filled in. Note that in $\sfrac{F}{\Gamma_s}$ the  vertex $ \pm\spalignmat{ 1; 0}$ in  has exactly $s$ vertices adjacent to it $$\pm\spalignmat{0;  1},\pm\spalignmat{1;  1}, ..., \pm\spalignmat{s-1;  1} $$ 
and the same is true for any other vertex
in $\sfrac{F}{\Gamma_s}$. It follows that each vertex of the quotient $\sfrac{F}{\Gamma_s}$ has degree $s$, and moreover each edge of $\sfrac{F}{\Gamma_s}$ is adjacent to exactly two triangles. If we denote the number of vertices, edges and faces of the quotient  $\sfrac{F}{\Gamma_s}$ by $v,e,f$ respectively, we see that $2e=sv$ and $3f=2e$. Hence the Euler characteristic of $\sfrac{F}{\Gamma_s}$ is $$v-e+f=v-e+\frac{2}{3}e=v-\frac{1}{3}e=v-\frac{s}{6}v=\frac{v(6-s)}{6}$$

Thus we see that this is a closed surface of positive Euler characteristic (i.e. spherical)  if $s<6$,
zero Euler characteristic (i.e. flat) if $s=6$, and negative Euler characteristic (i.e. hyperbolic) if $s>6$. We also observe that since $\Gamma_s$ consists of orientation preserving isometries of $\mathbb{H}^2$, the surface $\sfrac{F}{\Gamma_s}$ must be a closed oriented formula, and we can find its genus by the formula $g=1-\frac{\chi}{2}$.

As we discussed earlier, $\sfrac{\mathbb{H}^2}{\Gamma_s}$ is the surface $\sfrac{F}{\Gamma_s}$ punctured at $v$ vertices. Since none of them are closed surfaces (we are throughout assuming $s\geq 3$ in this discussion), we see that the fundamental group $\Gamma_s$ must be free of finite rank, with one generator for each puncture (i.e. number of vertices in  $\sfrac{F}{\Gamma_s}$) and one generator for each one-handle. This gives an alternative proof of the freeness of $\Gamma_s$ for $s\geq 3$, as mentioned earlier. 

Let us now try to understand the quotients $\sfrac{\mathbb{H}^2}{N_s}$. First we make the observation the parabolic isometry $x\mapsto x+s$ (in the upper half plane model of $\mathbb{H}^2$) corresponds\footnote{ Distinct non-trivial  powers of Dehn twists are conjugate in $PSL_2(\mathbb{R})$, but not in $PSL_2(\mathbb{Z})$.} to the matrix $\spalignmat{1 s; 0 1}$ in $PSL_2(\mathbb{Z})$, i.e. the $s$-th power of the Dehn twist about $\spalignmat{1;0}$. The fundamental group of $\sfrac{\mathbb{H}^2}{N_s}$ is by definition generated by the various loops in the surface $\sfrac{\mathbb{H}^2}{N_s}$. The peripheral loops (the ones surrounding punctures formed by removing the images of the ideal vertices from $\sfrac{F}{N_s}$) correspond to parabolic elements in the hyperbolic isometry group, i.e. some power of a Dehn twist.

Other loops correspond to hyperbolic\footnote{ No loops correspond to elliptic isometries since their action is not free.} elements in the isometry group. Since $N_s$ is generated by $s$-th powers of all Dehn twists, it follows that if we fill in all the punctures, all those peripheral loops now are trivial. Thus $\sfrac{F}{N_s}$ is simply connected, and so $\sfrac{\mathbb{H}^2}{N_s}$ must in fact be a sphere with (possibly infinitely many) punctures for any $s\geq 3$. Since $N_s$ is a subgroup of $\Gamma_s$, we get a covering space $\sfrac{\mathbb{H}^2}{N_s}\rightarrow \sfrac{\mathbb{H}^2}{\Gamma_s}$, and the number of sheets in the covering equals the index of the subgroup $N_s$ in $\Gamma_s$.

For $s\geq 6$, we know that $\sfrac{\mathbb{H}^2}{\Gamma_s}$ has positive genus, and any finite sheeted cover will also have positive genus. This means that for $s\geq 6$ this covering must be infinite sheeted as we saw earlier that $\sfrac{\mathbb{H}^2}{N_s}$ has zero genus. Moreover since covering space $\sfrac{\mathbb{H}^2}{N_s}\rightarrow \sfrac{\mathbb{H}^2}{\Gamma_s}$ is a normal, $N_s$ has to be a free group of countably infinite rank.


We observe that for $3\leq s\leq 5$, the collection of representative classes of primitive vectors in $\mathbb{Z}_s^2$ are precisely the ones listed in the last line of the statement of Theorem \ref{thmE}. Moreover, for $s=3,4,5$ the quotient $\sfrac{F}{\Gamma_s}$ is a regular triangulation of the sphere with exactly four, six and twelve vertices (i.e. a tetrahedron, octahedron and icosahedron) respectively. Thus, $\sfrac{\mathbb{H}^2}{\Gamma_s}$ is a sphere with exactly four, six and twelve puctures respectively for $s=3,4,5$. We know that the fundamental group of a sphere with $n$ punctures has the presentation $\langle x_1,...,x_n|x_1....x_n\rangle$ where the $x_i$ is the loop surrounding the $i$-th puncture. Hence the fundamental group of $\sfrac{\mathbb{H}^2}{\Gamma_s}$, $\Gamma_s$ is generated freely by all but one of the peripheral loops around the punctures, and as we discussed earlier, these are precisely the $s$-th powers of Dehn twists about these curves. So we conclude that $\Gamma_s$ coincides with $N_s$ for $s=3,4,5$, and we also obtain the statement about free generation by the given collection of Dehn twists.

\end{proof}

\begin{remark}
We can also show that for $s=3,4$, the subgroups $N_s$ are freely generated by all but one of the $s$-th powers of Dehn twists about the collection of curves stated in Theorem \ref{thmE} by using Theorem \ref{thmA}, as we explain below. One first uses induction to show that (this is similar to Claims
\ref{clA} and \ref{clB}) if we consider the subgroup $G$ generated by $s$-th powers of Dehn twists about the given curves, the subset $H^s_G$ consists of all primitive vectors in $\mathbb{Z}^2$. 

For $s=3$,  we set $\vec x= \spalignmat{1; 0}$,  $\vec y= \spalignmat{0; 1}$,  $\vec z= \spalignmat{1; 1}$ and $\vec w=\spalignmat{1; -1}$. We see that 

$$\spalignmat{1; -1} \xrightarrow{T_x^3} \spalignmat{-2; -1} \xrightarrow{T_z^3} \spalignmat{1; 2}  \xrightarrow{T_y^3}\spalignmat {1; -1} ,$$
consequently $T_y^3T_z^3T_x^3$ must be some power of $T_w$, and it turns out that $T_y^3T_z^3T_x^3=T_w^{-3}$. Since $x,y$ and $z$ are 3-proportional, the statement follows by Theorem $\ref{thmA}$.
\end{remark}

For $s=4$, let us define $\vec u=\spalignmat{1; 2}$, $ \vec v=\spalignmat{2; 1}$,  $\vec w=\spalignmat{1; -1}$, $\vec x= \spalignmat{1; 0}$, $\vec y= \spalignmat{0; 1}$, $\vec z= \spalignmat{1; 1}$. Now we observe that 

$$\spalignmat{1; 2} \xrightarrow{T_y^4} \spalignmat{1; -2} \xrightarrow{T_w^4} \spalignmat{-3; 2}  \xrightarrow{T_{x}^4}\spalignmat {5; 2}\xrightarrow{T_{v}^4}\spalignmat {-3; -2} \xrightarrow{T_{z}^4}\spalignmat {1; 2}.$$
Thus $T_z^4T_v^4T_{x}^4T_w^4T_y^4$ has to be some power of $T_u$, and in fact turns out to equal $T_u^{-4}$. Thus $N_4$ is the subgroup generated by $T_v^4$, $T_w^4, T_x^4, T_y^4$ and $T_z^4$, which we know is free by Proposition \ref{free}.




\section{Subgroups of mapping class group of one holed torus and the braid group on three strands}

In this section we discuss implications of the above results to subgroups generated by half-twists in the braid group $B_3$ on three strands, or subgroups generated by Dehn twists in the one holed torus $S^1_{1}$ (i.e. a torus with one boundary component).

Recall that the braid group $B_3$ is isomorphic to the mapping class group Mod$(D,3)$ of the  two dimensional disk with three marked points. The double branched cover of $D$ with three branched points is the one holed torus, and half twists about in Mod$(D,3)$ correspond to Dehn twists about lift of that arc in Mod$(S_1^1)$, and this induces an isomorphism $$ \text{Mod}(S_1^1)\cong \text{Mod($D$,3)}\cong \langle a,b|aba=bab\rangle, $$where $a$ and $b$ can be identified with the standard Artin generators $\sigma_1$ and $\sigma_2$ in $B_3$, or to the Dehn twists about the longitude and meridian in $S_1^1$. There is an analogous, but more complicated statement for braid groups with more strands, see \cite{BH}.
The mapping class group of the torus is obtained by adding to the presentation of Mod$(S_1^1)$ the relation $(ab)^6=1$ corresponding to capping of the boundary. So if we have a sequence of essential simple closed curves in $S_1^1$, and a relation among Dehn twists about them (or equivalently relations among half twists in $B_3$), we would still have a relation in Mod$(\mathbb{T}^2)$ once we cap off the boundary. 
In all of our previous discussion about subgroups in the mapping class group of the torus, the only cases where the subgroup was not a free group was Mod$(\mathbb{T}^2)$ (when we had two Dehn twists whose corresponding curves had geometric intersection number one) and $F_2\times C_2$ (which was generated by squares of Dehn twists about the $\spalignmat{1;0},\spalignmat{0;1} $ and $\spalignmat{1;1}$ curves). Let us now figure out what subgroups these Dehn twists generate in Mod$(S_1^1)$ (note that there is an isomorphism of first homology $H_1(S_1^1)\cong H_1(\mathbb{T}^2)$ and there is an analogous correspondence between simple closed curves in $S_1^1$ and primitive homology classes, up to orientation).

For the first case, we see that it is isomorphic to the subgroup generated by $a$ and $b$, which is Mod$(S_1^1)\cong B_3$. For the subgroup generated by the squares of Dehn twists about the curves $\vec x=\spalignmat{1;0},\vec y=\spalignmat{0;1} $ and $\vec z=\spalignmat{1;1}$, we note that $T_x^2=a^2$, $T_y^2=b^2$ and $T_z^2=b^{-1}a^2b$ and thus $$T_x^2T_y^2T_z^2=a^2b^2b^{-1}a^2b=a(aba)ab=ababab=\Delta^2,$$
the square of the Garside element, which generates the center of $B_3$. It follows that the subgroup generated by  $T_x^2$, $T_y^2$ and $T_z^2$ is isomorphic to the subgroup generated by $T_x^2$, $T_y^2$ and $\Delta^2$. This subgroup is isomorphic to $F_2\times F_1$ (the internal direct product of subgroup generated by $T_x^2$, $T_y^2$; and the infinite cyclic subgroup generated by  $\Delta^2$). 

Thus we can translate our earlier results in the torus and obtain similar results in  
Mod$(S_1^1)=B_3$. For instance, here is the analogue of Theorem \ref{thmD}.

\begin{thm}
The subgroup of $B_3$ generated by $s$-th powers of all half twists (or equivalently, the subgroup of Mod$(S_1^1)$ generated by $s$-th powers of all essential Dehn twists) is isomorphic to  $B_3$, $F_2\times F_1$, $F_3$, $F_5$,  $F_{11}$ and $F_\infty$ for $s$ equal to one, two, three, four, five, and greater than five  respectively.
\end{thm}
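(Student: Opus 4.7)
The plan is to transfer Theorem~\ref{thmD} from $\mathrm{Mod}(\mathbb{T}^2)$ to $B_3$ via the surjection $\pi\colon B_3\cong\mathrm{Mod}(S^1_1)\twoheadrightarrow\mathrm{Mod}(\mathbb{T}^2)$ whose kernel is the central cyclic subgroup $\langle (ab)^6\rangle=\langle\Delta^4\rangle$ generated by the boundary Dehn twist of $S^1_1$. Writing $M_s\le B_3$ for the subgroup generated by $s$-th powers of all half-twists and $N_s$ for the subgroup of $\mathrm{Mod}(\mathbb{T}^2)$ identified in Theorem~\ref{thmD}, one has $\pi(M_s)=N_s$ and a central short exact sequence
$$1\longrightarrow M_s\cap\langle\Delta^4\rangle\longrightarrow M_s\longrightarrow N_s\longrightarrow 1,$$
with kernel a cyclic subgroup of $\langle\Delta^4\rangle$.

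The cases $s=1$ and $s=2$ are handled by the discussion preceding the theorem: trivially $M_1=B_3$, and the explicit identity $T_x^2T_y^2T_z^2=\Delta^2$ together with an analogue of Claim~\ref{clB} (with $\langle\Delta^2\rangle\cong F_1$ in place of $C_2$) shows $M_2=\langle a^2,b^2,\Delta^2\rangle$. Since $\langle a^2,b^2\rangle$ is a free rank-two subgroup of $B_3$ (a standard fact about pure braids) meeting the center trivially (as can be checked via exponent sums modulo~$4$), we conclude $M_2\cong F_2\times F_1$.

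For $s\ge 3$, Theorem~\ref{thmE} exhibits $N_s$ as a free group with explicit free generators (of rank $3,5,11$ for $s=3,4,5$ and rank $\infty$ for $s\ge 6$). Lifting each generator to its canonical $s$-th power of half-twist in $B_3$, any relation in $M_s$ projects to a relation in the free group $N_s$, and so represents an element of $M_s\cap\langle\Delta^4\rangle$. Because free groups have trivial second cohomology, the central extension above splits, and hence $M_s\cong N_s\times(M_s\cap\langle\Delta^4\rangle)$; in particular this intersection is either trivial or infinite cyclic, and the resulting group is either $N_s$ or $N_s\times F_1$.

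The crux is therefore to determine $M_s\cap\langle\Delta^4\rangle$ for each $s\ge 3$. The principal tool is the exponent-sum homomorphism $B_3\to\mathbb{Z}$: each half-twist maps to $1$, so $\Delta^{4m}\mapsto 12m$ while any word in $s$-th powers of half-twists lands in $s\mathbb{Z}$; hence $\Delta^{4m}\in M_s$ forces $s\mid 12m$, ruling out all non-trivial powers of $\Delta^4$ whenever $\gcd(s,12)<s$ and already disposing of most $s\ge 6$. For the remaining small cases one lifts the defining relations of $N_s$ from Theorem~\ref{thmE} (for instance, the identity $T_y^3T_z^3T_x^3T_w^3=1$ for $s=3$ from the remark after Theorem~\ref{thmE}) to $B_3$, and reads off the exact element of $\langle\Delta^4\rangle$ produced by matching exponent sums against the combinatorial data provided there. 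This last bookkeeping is the main obstacle: combining the exponent-sum analysis with the free generation results of Theorem~\ref{thmE} to certify each isomorphism type in the theorem requires care, and for the borderline values $s\in\{3,4,6,12\}$ may require a finer invariant (such as the Meyer cocycle on $B_3$) in addition to exponent sum.
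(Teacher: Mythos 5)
Your reduction to the central extension $1\to M_s\cap\langle\Delta^4\rangle\to M_s\to N_s\to 1$, and the splitting of this extension because $N_s$ is free, is correct and is in fact sharper than what the paper does: the paper's entire argument for $s\ge 3$ is the remark that a relation among Dehn twists in Mod$(S_1^1)$ descends to a relation in Mod$(\mathbb{T}^2)$, which shows only that the lifts of the free generators of $N_s$ freely generate a copy of $N_s$ inside $M_s$, not that they generate all of $M_s$. You correctly identify the missing step as the computation of $M_s\cap\langle\Delta^4\rangle$, but you do not carry it out, and your one quantitative claim about it is wrong: the exponent-sum condition $s\mid 12m$ never "rules out all non-trivial powers of $\Delta^4$", since it always admits the non-zero solutions $m\in\frac{s}{\gcd(s,12)}\mathbb{Z}$ (e.g.\ $\Delta^{28}$ survives the test for $s=7$). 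So as written the proposal does not establish the statement for any $s\ge 3$.

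More seriously, the very computation you propose for $s=3$ gives the opposite of what the theorem asserts. Writing $\tilde T$ for the lift to Mod$(S_1^1)=B_3$ of a Dehn twist $T$, the relation $T_y^3T_z^3T_x^3T_w^3=1$ in $SL(2,\mathbb{Z})$ lifts to the element $\tilde T_y^3\tilde T_z^3\tilde T_x^3\tilde T_w^3\in\langle\Delta^4\rangle$, whose exponent sum is $4\cdot 3=12$; since exponent sum is injective on $\langle\Delta^4\rangle$ (with $\Delta^{4m}\mapsto 12m$), this element equals $\Delta^4\neq 1$. Concretely, with $\tilde T_x=a$, $\tilde T_y=b$, $\tilde T_z=b^{-1}ab$, $\tilde T_w=bab^{-1}$, one checks $b^2a^3ba^3ba^3b^{-1}=(ab)^6$, exactly parallel to the paper's own computation $a^2ba^2b=(ab)^3$ for $s=2$. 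Hence $M_3\cap\langle\Delta^4\rangle=\langle\Delta^4\rangle$ and your splitting yields $M_3\cong F_3\times F_1$, not $F_3$; the six-term relation for $s=4$ and the twelve-term relation for $s=5$ similarly lift to $\Delta^{8}$ and $\Delta^{20}$. So the gap you flagged is not routine bookkeeping: completed, your method contradicts the stated isomorphism types for $3\le s\le 5$ (and leaves $s\ge 6$ open), which indicates that either the lifting computation above is in error (I do not see where) or the statement, and the paper's one-line justification of it, needs correction to $F_3\times F_1$, $F_5\times F_1$, $F_{11}\times F_1$. You should resolve this discrepancy explicitly rather than defer it.
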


We remark that it is important that we talk about Dehn twists about essential simple closed curves in $S_1^1$, since if we allowed Dehn twist about boundary parallel curve $c$, there may be more relations as $T_c$ is a central element. If the subgroup $G$ of Mod$(S_1^1)$ does not contain any central element, then the subgroup of Mod$(S_1^1)$ generated by $G$ and $T_c^p$ (where $p$ is non-zero) is isomorphic to $G\times F_1$.  The situation is more complicated if $G$ contains some central element. 

 If we allow Dehn twists about the boundary parallel curve, the analogue of the previous theorem becomes:

\begin{thm}
 The subgroup of Mod$(S_1^1)$ generated by $s$-th powers of all Dehn twists (including Dehn twist about the boundary parallel curve $c$) is isomorphic to  $B_3$, $F_2\times F_1$, $F_3\times F_1$ and $F_5\times F_1$,  $F_{11}\times F_1$ and $F_\infty\times F_1$ for $s$ equal to one, two, three, four, five, and greater than five  respectively.
\end{thm}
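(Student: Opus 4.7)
The plan is to deduce this from the previous theorem (giving the subgroup $G_s$ generated by $s$-th powers of essential Dehn twists) combined with the remark preceding this theorem. Write $H_s$ for the subgroup we want to identify, so $H_s = \langle G_s, T_c^s\rangle$. The strategy is to decide, in each case, whether $T_c^s$ is already in $G_s$; if it is, then $H_s = G_s$, and otherwise the remark yields $H_s \cong G_s \times F_1$.

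For $s=1$, there is nothing to do since $G_1 = \mathrm{Mod}(S_1^1) = B_3$ already contains $T_c$. For $s=2$, the computation in the excerpt showing $T_x^2 T_y^2 T_z^2 = \Delta^2$ identifies $T_c$ with a generator of the $F_1$-factor of $G_2 \cong F_2 \times F_1$ (since $T_c$ is central of infinite order and the center of $B_3$ is generated by $\Delta^2$). In particular $T_c^2 \in G_2$, so $H_2 = G_2 \cong F_2 \times F_1$.

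The key case is $s \geq 3$. Here $G_s$ is free of rank at least two, hence has trivial center. I would argue: if $T_c^s$ were an element of $G_s$, then since $T_c$ (and therefore any power of it) is central in $\mathrm{Mod}(S_1^1)$, $T_c^s$ would be a central element of $G_s$, forcing $T_c^s = 1$; this contradicts the fact that $T_c$ has infinite order. Therefore $G_s$ contains no nontrivial central element of $\mathrm{Mod}(S_1^1)$, the hypothesis of the remark is satisfied, and we conclude $H_s \cong G_s \times F_1$. Substituting the values of $G_s$ from the previous theorem gives $F_3 \times F_1$, $F_5 \times F_1$, $F_{11} \times F_1$, and $F_\infty \times F_1$ for $s=3,4,5,>5$ respectively.

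The only real subtlety is the argument above that $G_s$ contains no nontrivial power of $T_c$ for $s \geq 3$; this rests cleanly on the freeness of $G_s$ (rank $\geq 2$) established in the preceding theorem plus the centrality of $T_c$, so I do not expect any serious obstacle.
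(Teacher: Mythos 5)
Your proof is correct and is essentially the paper's argument: for $s=1,2$ the relevant power of $T_c$ already lies in $G_s$, while for $s\ge 3$ the freeness of $G_s$ (rank $\ge 2$, hence trivial center) forces $G_s$ to contain no nontrivial central element of Mod$(S_1^1)$, so the preceding remark yields $G_s\times F_1$. (One tiny imprecision: $T_c=\Delta^4$ is the \emph{square} of the generator $\Delta^2$ of the $F_1$-factor of $G_2$, not a generator itself, but since $T_c$ is a power of $\Delta^2$ the needed conclusion $T_c^2\in G_2$ still follows.)
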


\begin{proof}
We observe that the subgroup generated by $s$-th powers of all essential Dehn twists contains $T_c=\Delta^4$ when $s$ equals 1 or 2, and does not contain any central element when $s$ is at least 3 (otherwise those subgroups will not be free).
\end{proof}

\end{document}